\documentclass[11pt]{amsart}
\usepackage{graphicx}%
\usepackage{multirow}%
\usepackage{amsmath,amssymb,amsfonts}%
\usepackage{amsthm}%
\usepackage{mathrsfs}%
\usepackage{booktabs}%
\usepackage{float}
\usepackage{multirow}
\usepackage{graphicx,epsfig}
\usepackage{color}
\usepackage{epsfig,subfigure,epstopdf}
\usepackage{graphicx,euscript}
\usepackage{hyperref} 
\usepackage{multirow,array}

\def\cM{\mathcal{M}}
\def\cN{\mathcal{N}}

\def\vnu{\vec{\nu}}

\def\vy{\vec{y}}
\def\vX{\vec{X}}
\def\vY{\vec{Y}}
\def\vZ{\vec{Z}}
\def\vmY{\vec{\mathcal{Y}}}
\def\vmZ{\vec{\mathcal{Z}}}
\def\hX{\hat{X}}

\def\vxi{\vec{\xi}}

\def\Id{{\rm Id}}
\def\vv{\vec{v}}
\def\cI{\mathcal{I}}
\def\vphi{\vec{\varphi}}
\def\half{1/2}

\def\ve{\vec{e}}
\def\tvnu{\vec{{\nu}}}

\def\vtau{\vec{\tau}}

\def\ttau{\vec{\tau}}

\def\vd{\vec{d}}

\def\vb{\vec{b}}
\def\vrho{\vec{\rho}}

\def\Ritz{\mathcal{R}}
\def\R{\mathcal{R}^{\mathcal{I}}}
\def\mH{\mathcal{H}}

\def\mI{\mathbb{I}}
\def\hRh{\hat{R}_h}
\def\hrho{{\hat{\rho}}}

\theoremstyle{thmstyletwo}%
\newtheorem{theorem}{Theorem}[section]
\newtheorem{remark}{Remark}[section]%
\newtheorem{lemma}{Lemma}[section]
\theoremstyle{definition}
\newtheorem{example}{Example}[section]%
\numberwithin{equation}{section}

\usepackage{cleveref}

\begin{document}


\title[ ]{A revisit to DeTurck method on curve shortening flow with optimal error analysis}
	\author{Beiping Duan}
	\address{MSU-BIT-SMBU Joint Research Center of  Applied Mathematics, 
	Shenzhen MSU-BIT University, Shenzhen  518172, P.R. China}
\email{duanbeiping@smbu.edu.cn}

\begin{abstract}
The curve shortening--DeTurck flow introduced by Elliott and Fritz (IMA J. Numer. Anal. 37(2): 543--603, 2017) is a reparameterization of the curve shortening flow via the DeTurck trick. For corresponding discrete schemes, although the $H^1$-error estimate has been established, the optimal $L^2$-error estimate remains open due to technical difficulties. In this paper, we prove optimal $L^2$-error estimates for linearized Euler and Crank--Nicolson discretizations combined with finite elements of degree $k\ge 1$ in space. Moreover, we provide an extrinsic approach to derive the curve shortening--DeTurck flow. Numerical experiments are presented to illustrate the mesh distribution properties and to verify the convergence rates in both space and time.
\end{abstract}

\keywords{curve shortening flow, DeTurck method, harmonic map heat flow, error analysis}


\maketitle

\section{Introduction}
Suppose $\Gamma(t)$ is a closed regular planar curve  which evolves with time and is parameterized by the diffeomorphism $\vX(t,s): I\rightarrow \Gamma(t)$ with  $I$  a periodic interval. The evolving curve $\Gamma(t)$ then can be described by virtue of the flow map $\vX(t,s)$:
\begin{equation}\label{eqn-evolving}
	\frac{\partial\vX(t,s)}{\partial t}=\vv\circ \vX(t,s),
\end{equation}
where in the case of curve shortening flow (CSF) $\vv=\partial^2_{\gamma}\Id|_{\Gamma(t)}$ with  $\Id$ the identity map from $\Gamma(t)$ onto itself and $\gamma$ its arc length parameter. If we introduce the perimeter $L=\int_{\Gamma(t)}1\, d\gamma$ as the functional energy, then the CSF   can be considered as the $L^2$  gradient flow.

Utilizing the chain rule one can reform the CSF in the parameter space $I$ base on which the weak form reads: Find $\vX(t,s)\in H^1(I)^2$ with $\vX(0,s)=\vX^0(s)$ such that 
\begin{equation}\label{ori-weak}
	\int_I \frac{\partial\vX}{\partial t} \cdot \vxi\; \big|\partial_s\vX\big|ds=\int_I \frac{\partial_s\vX\cdot \partial_s\vxi}{\big|\partial_s\vX\big|} ds=0 \quad \forall\vxi\in H^1(I)^2.
\end{equation}
Applying linearized Euler scheme in time (dealing with $|\partial_s\vX|$ explicitly) and piecewise linear elements in space one can get Dziuk's original method \cite{Dziuk-1990}. 

The first attempt at an error analysis of Dziuk's scheme was carried out in \cite{dziuk1994convergence}, where the \( H^1 \) error of a semidiscrete formulation for the CSF was studied. The \( H^1 \) error for the fully discrete scheme was established by  \cite{ye2021convergence}. We also note that prior to their work,  \cite{li2020convergence} derived the \( L^2 \)-error estimate for the fully discrete scheme using finite elements of degree \( k \geq 3 \). Subsequently,  \cite{li2021convergence,bai2023erratum} proved the convergence of the semidiscrete scheme for closed surfaces using finite elements of degree \( k \geq 6 \), where the matrix-vector formulation developed in \cite{KLLP2017, Kovacs-Li-Lubich-2019} was employed.

Dziuk's method is simple to implement and leads to a decoupled system in the \(x\)- and \(y\)-directions (and \(z\)-direction if applicable). However, the drawback is that it includes only normal velocities, which may cause vertices on the numerical curve to collapse into each other, so that \(\big|\partial_s \vX_h^n\big|\) — the discrete analogue of \(\big|\partial_s\vX(t)\big|\) at \(t=t_n\) — approaches \(0\). For the CSF, an early method incorporating tangential motion was presented by \cite{DeckelnickDziuk}, where the weak form reads: Find \(\vX(t,s)\in H^1(I)^2\) with \(\vX(0,s)=\vX^0(s)\) such that 
\begin{equation}\label{dd-weak}
	\int_I \frac{\partial\vX}{\partial t} \cdot \vxi\; \big|\partial_s\vX\big|^2 ds+\int_I {\partial_s\vX\cdot \partial_s\vxi}ds = 0 \quad \forall\vxi\in H^1(I)^2.
\end{equation}
The spatial \(H^1\) error of the semidiscrete scheme for \eqref{dd-weak} was also given in their paper. To the best of our knowledge, the optimal \(L^2\) error of a fully discrete scheme based on \eqref{dd-weak} was first proved by \cite{deckelnick2025second}, published 30 years after \cite{DeckelnickDziuk}. Some of the tricks they used in their proof provided the original inspiration for the error estimates in this paper.

\cite{BGN07a} presented a framework (referred as the Barrett-Garcke-N\"urnberg, BGN scheme) for inducing tangential motions for \eqref{eqn-evolving} with $\vv\cdot \vnu=f(H)$, where $H$ and $\vnu$ are the curvature and the unit normal, respectively. When applied to the CSF, this method is linear and preserves the energy-decaying property. Moreover, it also enjoys good mesh distribution and has been widely used since then. Later, the idea was extended to anisotropic case in \cite{barrett2008numerical}. However, the convergence of the BGN method remains open to date. The most closely related work in this direction is that of \cite{bai_convergence_2024}, in which the optimal $L^2$ error for a stabilized BGN scheme using finite elements of degree $k\ge 2$ was established.

The DeTurck method that we will discuss in this paper was originally proposed by \cite{Elliott-Fritz-2017} for the CSF and the mean curvature flow (MCF). The idea is to use the DeTurck trick given in \cite{deturck1983deforming} to reparametrize the original flow so that the new flow, referred as the curve shortening-/mean curvature-DeTurck flow, includes a proper tangential velocity, which can maintain the mesh quality produced by the corresponding numerical scheme. An optimal $H^1$-error estimate for the semidiscrete scheme of the CSF was given. The approach they adopted to obtain the estimate is similar to that used in \cite{DeckelnickDziuk}. It is also worth pointing out that \eqref{dd-weak} is actually a special case of the DeTurck method.

For error estimates on the curve evolution, many efforts have also been made beyond the CSF, see, e.g., \cite{deckelnick2003error,deckelnick2009error,barrett2017numerical,pozzi2023convergence}, all of which focus on semidiscrete schemes in space and optimal $H^1$ estimates. Recently, \cite{deckelnick2025finite} demonstrated optimal $L^2$ estimates for the semidiscrete schemes of the curve diffusion and the elastic flow. Their formulations can induce tangential motions that lead to an equidistributed mesh in practice. For earlier work on the convergence analysis of fully discrete schemes for the elastic flow, one may refer to \cite{bartels2013simple}, in which piecewise B\'ezier curves were used for the spatial discretization.

The harmonic map between two surfaces is often employed to approximate the conformal map and, in the discrete setting, helps preserve mesh quality. Building on this concept, we introduced harmonic maps and their heat flows to develop mesh-preserving parametric FEMs for geometric flows of surfaces; see \cite{duan2024new,duan2024energy,duan2025mesh}. In fact, it was shown in \cite{duan2024energy} that the variant DeTurck method for the MCF of surfaces derived via the intrinsic approach in \cite{Elliott-Fritz-2017} can be obtained by coupling the original flow with a harmonic map heat flow from a reference surface onto the unknown with a specific diffusion coefficient.

In this paper (Section \ref{sec-harmonic}) we will show that the aforementioned coupling approach can be used to derive the DeTurck method for the CSF. In fact, the harmonic map between curves acts by stretching or shrinking the curve uniformly at each point. When applied to discrete schemes, this results in the target mesh distributing its vertices in a manner that resembles the reference mesh — specifically, the ratios between corresponding (curved) edge lengths of the target and reference curves are equal. This property explains why discrete schemes based on \eqref{dd-weak} can distribute the mesh vertices equidistantly as time evolves. We would also like to mention that the harmonic (Dirichlet) energy was introduced in \cite{pozzi2023convergence,deckelnick2025finite}  to penalize the length of the evolving curve under the elastic flow.

The main contribution of this paper is the optimal \(L^2\)-error estimates for two fully discrete DeTurck methods — the linearized Euler and Crank–Nicolson schemes — presented in Section \ref{sec-Euler} and Section \ref{sec-CN}, respectively. The key to obtaining the optimal estimates is to establish the superconvergence of the numerical solutions, namely, the \(H^1\) seminorm satisfies \(\big\|\partial_s(\vX_h^n-\R_h \vX(t_n))\big\| \le \mathcal{O}((\Delta t)^l +h^{k+1})\) with \(l=1\) under Euler discretization and \(l=2\) for the Crank–Nicolson scheme. Here \(\R_h\) denotes the interpolation or the Ritz projection,  depending on the polynomial degree \(k\). We provide some numerical tests in Section \ref{sec-exp} to verify our theoretical analysis and the mesh distribution.

\section{Revisit to the  DeTurck method}\label{sec-harmonic}
\subsection{Harmonic map heat flows between curves}
Denote by $\cM$ and $\cN$ two regular curves, given by the parameterizations $\vphi_1(s)$ and $\vphi_2(\rho)$, respectively. For convenience we take $s$ and $\rho$ as the arc-length parameters of $\cM$ and $\cN$, respectively.  For any $C^2$-diffeomorphism $\Psi$, the harmonic energy $\mH[\Psi]$ is defined by 
\begin{equation}\label{Dirichlet-Energy}
	\mH[\Psi]=\frac{1}{2}\int_s\left|\partial_s(\Psi\circ\vphi_1)\right|^2 ds.
\end{equation} 
We introduce the map between the parameter spaces of $\cM$ and $\cN$: $\vphi_2^{-1}\circ\Psi\circ\vphi_1$, and denote it by $f$ for simplicity.  Using the identity $\Psi\circ\vphi_1=\vphi_2\circ f$ and the chain rule, it follows
\begin{equation*}
	\left|\partial_s(\Psi\circ\vphi_1)\right|^2=\left|\partial_\rho\vphi_2\right|^2\left(\partial_sf\right)^2=\left(\partial_sf\right)^2,
\end{equation*}
where the last step follows from that $\rho$ is the arc length parameter. So we can put the harmonic energy in the following intrinsic form:
\begin{equation}\label{Dirichlet-Energy-in}
	\mH[\Psi]=\frac{1}{2}\int_s \left(\partial_sf\right)^2  ds:=\mH[f].
\end{equation}
We say $\Psi$ is harmonic if the variational derivative $\frac{\delta \mH[f]}{\delta f}=0$.
\begin{lemma}
	Suppose $\cM$ and $\cN$ are two regular curves with $|\cM|$ and $|\cN|$ the corresponding perimeters.  Let $\Psi$ be a $C^2$-diffeomorphism from $\cM$ onto $\cN$, then $\Psi$ is harmonic implies $\frac{d\rho}{d s}=\frac{|\cN|}{|\cM|}$.
\end{lemma}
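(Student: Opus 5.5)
The plan is to compute the Euler--Lagrange equation of the intrinsic energy \eqref{Dirichlet-Energy-in} and then integrate it. Write $f=\vphi_2^{-1}\circ\Psi\circ\vphi_1$, so that $\rho=f(s)$. Here $f$ maps the periodic interval $[0,|\cM|)$ onto $[0,|\cN|)$ and is a $C^2$-diffeomorphism. We admit variations of the form $f_\epsilon=f+\epsilon\eta$ with $\eta$ smooth and $|\cM|$-periodic. These keep the lifted map a degree-one map between the circles, and for small $\epsilon$ they are still diffeomorphisms. Since $\partial_s f_\epsilon$ is periodic, no boundary terms appear after integration by parts, and we get
\[
\frac{d}{d\epsilon}\Big|_{\epsilon=0}\mH[f_\epsilon]=\int_s \partial_s f\,\partial_s\eta\,ds=-\int_s \partial_s^2 f\,\eta\,ds .
\]
Harmonicity means $\frac{\delta \mH[f]}{\delta f}=0$. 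Because $\eta$ is arbitrary, this forces $\partial_s^2 f=0$ on the period, using that $f\in C^2$.

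From $\partial_s^2f=0$ we conclude that $\frac{d\rho}{ds}=\partial_s f\equiv c$ for some constant $c$. Since $f$ is an orientation-preserving diffeomorphism, $c>0$; if $\Psi$ reverses orientation, replace $\rho$ by $-\rho$. To identify $c$, integrate over one period. As $s$ traverses $\cM$ once, $\Psi\circ\vphi_1$ traverses $\cN$ exactly once because $\Psi$ is a diffeomorphism. In lifted form this reads $f(s+|\cM|)=f(s)+|\cN|$. Therefore
\[
c\,|\cM|=\int_0^{|\cM|}\partial_s f\,ds=|\cN|,
\]
which gives $\frac{d\rho}{ds}=\frac{|\cN|}{|\cM|}$.

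The main point requiring care is the treatment of $f$ on the periodic parameter domain. Taken literally, $f$ is not periodic: it jumps by $|\cN|$. The fix is to work with the lift to $\mathbb{R}$, for which $\partial_s f$ is genuinely periodic. The admissible variations must then be periodic perturbations $\eta$ of this lift, so that the integration by parts is justified and the total increment $|\cN|$ over one period is preserved. With this set up, the remaining steps are routine.
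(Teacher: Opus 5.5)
Your proof is correct and follows the same route as the paper: you derive $\partial_s^2 f=0$ from the vanishing first variation, then fix the constant slope using the normalization $f(0)=0$, $f(|\cM|)=|\cN|$, which is exactly your lift condition $f(s+|\cM|)=f(s)+|\cN|$. Your treatment of periodic variations and of orientation simply makes explicit what the paper leaves implicit.
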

\begin{proof}
	It is easy to obtain $\frac{\delta E[f]}{\delta f}=0$ gives $\partial_{ss}f=0$. Without losing generality we assume that $f(0)=0$ and $f(|\cM|)=|\cN|$, together with $\partial_{ss}f=0$ then the desired equality follows.
\end{proof}

The lemma above reveals that harmonic maps between curves actually stretch or shrink the curve uniformly at each point. The extrinsic version of the harmonic map heat flow between $\cM$ and $\cN$ can be given by
\begin{equation}\label{heat-flow-c}
	\partial_t \Psi= \left[\partial_{ss}  \Psi \cdot(\vtau\circ \Psi)\right](\vtau\circ\Psi),
\end{equation}
where and hereafter, following the convention,  we simply use e.g., $\partial_{s} \Psi$ instead of $\partial_{s} (\Psi\circ\vphi_1)$ when $s$ is the  arc length parameter. In fact, one can get \eqref{heat-flow-c} from the intrinsic version $\partial_t f=\partial_s^2 f$ with utilizing the chain rule.  The heat flow \cref{heat-flow-c} decreases the harmonic energy and finally approaching the equilibrium that satisfies $\frac{d\rho}{d s}=\frac{|\cN|}{|\cM|}$.

\subsection{Reformulations for curve shortening flow}
Now we focus on the target equation \cref{eqn-evolving} with $\vv=\partial_\gamma^2\Id|_{\Gamma(t)}$. Without losing generality we adopt the unit circle centered at the origin as $\cM$ and  let $I=\mathbb{R}/2\pi$ be the periodic interval.  Furthermore, let $\vtau$ represent the unit tangent of the unknown $\Gamma(t)$, and for notational simplicity, we also adopt $\vtau$ and $\vnu$ to represent $\vtau\circ\vX$ and  $\vnu\circ\vX$, respectively.

Follow the idea from \cite{duan2024energy} we reformulate the original flow by coupling it with the harmonic map heat flow form $\cM$ onto $\Gamma(t)$. Note that $I$ is actually the parameter space of the arc length, the map between $\cM$ to $\Gamma(t)$ is equivalent to the map from $I$ to $\Gamma(t)$.  Consequently, the evolution governed by the CSF can be described by the following system, in which  for simplicity we still denote the new flow map by $\vX(t,s)$:
\begin{equation}\label{CE-HMHF}
	\frac{\partial \vX}{\partial t} =\partial_\gamma^2\Id \circ \vX+  k(t,s)(\partial_{s}^2\vX \cdot  \ttau)\ttau,
\end{equation}
where $s\in I$ and $ k(t,s): I \rightarrow \mathbb{R}_{> 0}$ is a smooth scalar function. Recall that $\partial_\gamma^2\Id=-H\vnu$ with $H$ the curvature and $\vnu$ the unit outward normal vector. So by testing  \eqref{CE-HMHF} with $(\tau\otimes\tau)\,\partial_t\vX$, moving the diffusion coefficient $k(t,s)$ to the left hand side and integrating both sides on $I$ one can get
\begin{equation}\label{dacy-harmonic}
	\int_I k^{-1}\big(\partial_t\vX\cdot\vtau\big)^2ds=\int_I (\partial_{s}^2\vX \cdot  \ttau)\ttau\cdot\partial_t\vX ds=-\frac{d}{dt}\mH[\vX].
\end{equation} 
Furthermore, test \eqref{CE-HMHF} by $(\vnu\otimes\vnu)\,\partial_t\vX$ and integrate along $\Gamma(t)$ then it follows
\begin{equation}\label{dacy-energy}
	\int_{\Gamma(t)} (\partial_t\vX\cdot\vnu)^2 ds=\int_{\Gamma(t)} \partial_\gamma^2 \Id \cdot\partial_t\vX d\gamma=-\frac{d}{dt} L[\vX],
\end{equation}
where $L[\vX]=\int_{I}|\partial_s\vX| ds$ is the perimeter of $\Gamma(t)$. The two equations \eqref{dacy-harmonic} and \eqref{dacy-energy} imply the reformulated system \eqref{CE-HMHF} decreases both the harmonic energy and the perimeter.

Next, we will show that the curve shortening-DeTurck flow is a special case of \eqref{CE-HMHF}. Note that 
\begin{equation}\label{partial_ID}
	\begin{aligned}
		\partial_\gamma^2\Id \circ \vX&=\frac{1}{|\partial_s X|}\left(\partial_s\Big(\frac{\partial_s \vX}{|\partial_s \vX|}\Big)\right)=\frac{1}{|\partial_s \vX|^2}\partial_s^2\vX-\frac{1}{|\partial_s \vX|^4}(\partial_s^2\vX\cdot\partial_s\vX)\partial_s\vX\\
		&=\frac{1}{|\partial_s \vX|^2}\partial_s^2\vX-\frac{1}{|\partial_s \vX|^2}(\partial_s^2\vX\cdot\ttau)\ttau,
	\end{aligned}
\end{equation}
so we can put \eqref{CE-HMHF} into the following form
\begin{equation}\label{CE-HMHF-1}
	|\partial_s X|^2\frac{\partial \vX}{\partial t} =\partial_s^2\vX+  \big(k(t,s)|\partial_s X|^2-1\big)(\partial_{s}^2\vX \cdot  \ttau)\ttau.
\end{equation}
Take $k(t,s)=|\partial_s \vX|^{-2}$ in \eqref{CE-HMHF-1} then it reduces to 
\begin{equation}\label{Deckelnick-Dziuk}
	|\partial_s X|^2\frac{\partial \vX}{\partial t} =\partial_s^2\vX,
\end{equation}
which, to ours best knowledge, appeared firstly in \cite[eq.(5)]{DeckelnickDziuk}. Take $k=\alpha^{-1}|\partial_s \vX|^{-2}$ with $\alpha$ a positive constant then \eqref{CE-HMHF-1} reduces to
\begin{equation}\label{Elliott-Fritz}
	|\partial_s X|^2\frac{\partial \vX}{\partial t} =\partial_s^2\vX+(\alpha^{-1}-1)(\partial_s^2\vX \cdot  \ttau)\ttau,
\end{equation}
which was proposed by \cite[eq. (2.16)]{Elliott-Fritz-2017} based on the DeTurck trick. Note that $\partial_{s}^2 \vX \cdot\ttau=\partial_s|\partial_s\vX|$, then the tangential term in \eqref{Elliott-Fritz} can be further put into the following form
\begin{equation*}
	\begin{aligned}
		(\partial_s^2\vX\cdot\ttau)\ttau=\left(\partial_s|\partial_s\vX|\right)\ttau&=\partial_s\left(|\partial_s\vX|\ttau\right)-|\partial_s\vX|\partial_s\ttau\\
		&=\partial_s^2\vX-|\partial_s\vX|^2\partial_\gamma^2\Id\circ\vX\\
		&=\partial_s^2\vX-|\partial_s\vX|^2(\tvnu\otimes\tvnu)\frac{\partial\vX}{\partial t}.
	\end{aligned}
\end{equation*}
Substitute it into \eqref{Elliott-Fritz} it follows
\begin{equation}\label{Elliott-Frit-1}
	|\partial_s X|^2\left(\alpha\mI+(1-\alpha)\tvnu\otimes\tvnu\right)\frac{\partial \vX}{\partial t} =\partial_s^2\vX,
\end{equation}
which is exactly the one appeared in \cite[eq. (2.21)]{Elliott-Fritz-2017}, where  $\mI$ denotes the identity matrix. 

\begin{remark}
	As was reported in  \cite[Figure 1]{deckelnick2025second} and \cite[Example 2]{Elliott-Fritz-2017}, discrete version of \eqref{Deckelnick-Dziuk} or \eqref{Elliott-Frit-1} owns good property on  equidistribution of the mesh. This is because the corresponding reformulated system is a special case of \eqref{CE-HMHF} that decreases the harmonic energy as time evolves. So it leads to equidistributed mesh asymptotically when $I$ is uniformly divided, as long as the scheme is convergent. In addition, following \cite{Elliott-Fritz-2017} we refer both \eqref{CE-HMHF-1} and \eqref{Elliott-Frit-1} the curve shortening-DeTurck flow. 
\end{remark}

With this choice, our weak formulation for the CSF evolving in $\mathbb{R}^2$ reads: Find $\vX(t,s)\in H^1(I)^2$ with $\vX(0,s)=\vX^0(s)$ such that
\begin{equation}\label{weak-form}
	\left(|\partial_s X|^2\left(\alpha\mI+(1-\alpha)\tvnu\otimes\tvnu\right)\frac{\partial \vX}{\partial t} ,\vxi\right)+\left(\partial_s\vX,\partial_s\vxi\right)=0 \quad \forall\vxi\in H^1(I)^2,
\end{equation}
where $\alpha> 0$ is a constant along the curve.

\section{Error estimate for Euler scheme}\label{sec-Euler}
Let $\cup_{j=1}^N I_j$ be the partition of $I$ with $I_j=[s_{j-1}, s_j]$. Hereafter, we always use the periodicity of the index, namely, $-1$ corresponds to $N-1$ and $N+1$ corresponds to $1$. Based on the partition  we define the following finite element space
\begin{equation*}
	S(I)=\left\{w|\; w \mbox{ is continuous  and } w|_{I_j} \mbox{ is polynomials of degree } k \right\}.
\end{equation*}

Suppose that $\vX_h^0\in S(I)^2$ approximates $\vX(0)$ in an appropriate manner. The semidiscrete weak formulation to approximate $\vX(t)$ reads: Find $\vX_h(t)\in S(I)^2$ with $\vX_h(0)=\vX_h^0$ such that 
\begin{equation}\label{semi-weak}
	\left(|\partial_s \vX_h|^2\left(\alpha\mI+(1-\alpha)\tvnu_h\otimes\tvnu_h\right)\frac{\partial \vX_h}{\partial t} ,\vxi_h\right)+\left(\partial_s\vX_h,\partial_s\vxi_h\right)=0 \quad \forall \vxi_h\in S(I)^2,
\end{equation}
where $\vnu_h$ is the unit normal given by $\vnu_h=\vtau_h^{\perp}:=(\partial_s\vX_h)^\perp/|\partial_s\vX_h|$ with ${}^\perp$ denoting the clockwise rotation by $\frac{\pi}{2}$. Note that $\vnu_h\otimes \vnu_h=\mI-\vtau_h\otimes \vtau_h$ for planar curves one can reform \eqref{semi-weak} into the following form
\begin{equation}\label{semi-weak-1}
	\left( \big(|\partial_s\vX_h|^2\mI +(\alpha-1)\partial_s \vX_h\otimes \partial_s\vX_h\big)\frac{\partial \vX_h}{\partial t} ,\vxi_h\right)+\left(\partial_s\vX_h,\partial_s\vxi_h\right)=0\quad \forall \vxi_h\in S(I)^2.
\end{equation}

We divide $[0,T]$ into $M$ uniform subintervals and denote by $t_n=n\Delta t$ with $\Delta t=\frac{T}{M}$. Let $\vX_h^{n}$ be the approximate solution of $\vX(t_n)$ in our fully discrete scheme, then our linearized Euler scheme reads: Find $\vX_h^{n+1}\in S(I)^2$ such that
\begin{equation}\label{Euler}
	\left(G(\vX_h^n)\frac{\vX_h^{n+1}-\vX_h^n}{\Delta t},\vxi_h\right)+\left(\partial_s\vX_h^{n+1},\partial_s\vxi_h\right)=0 \quad \forall \vxi_h\in S(I)^2,
\end{equation}
where for any $\vb\in H^1(I)^2$ we define
\begin{equation}\label{eqn-G}
	G(\vb)=\big|\partial_s\vb\big|^2\mI +(\alpha-1)\partial_s\vb\otimes\partial_s\vb.
\end{equation}

For  $\vy(s)\in H^1(I)^2$ we introduce the Ritz projection $\Ritz_h$, given by: 
\begin{equation*}
	\left(\partial_s(\Ritz_h \vy- \vy),\partial_s\vxi_h\right)=0 \quad \forall\vxi_h\in S(I)^2 \quad \mbox{with }\int_I \Ritz_h\vy-\vy\,ds=\vec{0}.
\end{equation*}
Then it is known that for smooth enough $\vy$
\begin{equation*}
	\big\| \Ritz_h\vy-\vy\big\| + h\big\| \partial_s \Ritz_h\vy-\partial_s\vy\big\|\le ch^{k+1} \big\| \partial_s^{k+1}\vy\big\|,
\end{equation*}
where $\| \cdot \|$ represents the $L^2(I)$ norm. Here and hereafter we always use $c$ or $\tilde{c}$ to denote a generic positive constant which is independent of $h$, $\Delta t$ and $\alpha$, and may be different in different places. 

Denote by $\cI_h$ the piecewise interpolation operator mapping from $C(I)$ onto $S(I)$, then it is easy to verify in the case of $k=1$
\begin{equation*}
	\left(\partial_s(\cI_h \vy-\vy),\partial_s\vxi_h\right)=0 \quad  \forall \vxi_h\in S(I)^2.
\end{equation*}
Besides, let $\R_h$ represent the Ritz projection operator when $k\ge 2$, and in the case of $k=1$ we use it to denote the interpolation operator $\cI_h$. With this we split the error $\vX(t_n)-\vX_h^{n}$ into two parts
\begin{equation*}
	\ve_h^{n}=\R_h\vX(t_n)-\vX_h^n \quad \mbox{and}\quad \vrho_h=\R_h\vX(t)-\vX(t),
\end{equation*}
so  $\vX(t_n)-\vX_h^{n}=\ve_h^n-\vrho_h^n$ with $\vrho_h^n=\vrho_h(t_n)$. We initialize the scheme by set  $\vX_h^0=\R_h\vX(0)$.

\begin{theorem}\label{est-Euler-H1}
	Suppose \eqref{Elliott-Frit-1} has a unique solution $\vX(t,s)$ which is smooth enough with respect to both $t$ and $s$ up to some $t>T$. Then there exists  some $h_0\le 1$ such that for $h\le h_0$ and $t\le h^{1/2}$ being small enough, our Euler scheme \eqref{Euler} has a unique solution and satisfies
	\begin{equation}\label{est-Euler}
		\big\|\ve_h^m \big\| + \big\|\partial_s\ve_h^{m}\big\|\le c(\alpha)\big(\Delta t+h^{k+1}\big) \quad \mbox{for }m \Delta t \le T,
	\end{equation}
	where $ c(\alpha)$ is a constant that is independent of $\Delta t$ and $h$ but depends on $\alpha$. Together with the estimate for $\vrho_h$ we have the optimal error bound
	\begin{equation}\label{est-total}
		\big\|\partial_s^p(\vX_h^m-\vX(t_m)) \big\| \le   c(\alpha)\big(\Delta t +h^{k+1-p}\big),\quad p=0,1.
	\end{equation}
\end{theorem}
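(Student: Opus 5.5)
The plan is an induction on $m$ with a bootstrap assumption. The assumption is
\[
\|\partial_s\ve_h^n\|_{L^\infty}\le h^{1/4}\quad\text{for } n\le m,
\]
which is obtained from an $H^1$ bound of the form $c(\Delta t+h^{k+1})$ combined with the inverse estimate $\|\partial_s w_h\|_{L^\infty}\le ch^{-1/2}\|\partial_s w_h\|$. This is where the stepsize condition $\Delta t\le h^{1/2}$ (interpreting ``$t\le h^{1/2}$'') enters. Together with $\|\partial_s\vrho_h\|_{L^\infty}\le ch^{k}$, the assumption gives $|\partial_s\vX_h^n|\ge c_0/2>0$ and $|\partial_s\vX_h^n|\le C$. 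It follows that $G(\vX_h^n)$ is uniformly positive definite, with $G(\vb)\xi\cdot\xi\ge\min(1,\alpha)|\partial_s\vb|^2|\xi|^2$, and uniformly bounded. Existence and uniqueness of $\vX_h^{n+1}$ are then immediate, since \eqref{Euler} is a linear system with a coercive matrix.

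\textbf{Step 1: error equation.} Evaluate \eqref{weak-form} at $t_{n+1}$ and insert $\R_h\vX$. By the defining property of $\R_h$ (Ritz projection, or $\cI_h$ when $k=1$), $(\partial_s\vrho_h,\partial_s\vxi_h)=0$. Subtracting \eqref{Euler} then gives
\[
\Big(G(\vX_h^n)\frac{\ve_h^{n+1}-\ve_h^n}{\Delta t},\vxi_h\Big)+(\partial_s\ve_h^{n+1},\partial_s\vxi_h)=\sum_i E_i(\vxi_h),
\]
with the following consistency terms:
\begin{itemize}
\item[(a)] $\big(G(\vX(t_{n+1}))\partial_t\vX(t_{n+1})-G(\vX(t_n))\tfrac{\vX(t_{n+1})-\vX(t_n)}{\Delta t},\vxi_h\big)=O(\Delta t)\|\vxi_h\|$;
\item[(b)] $\big(G(\vX(t_n))\tfrac{\vrho_h^{n+1}-\vrho_h^n}{\Delta t},\vxi_h\big)=O(h^{k+1})\|\vxi_h\|$;
\item[(c)] the crucial term $\big((G(\vX(t_n))-G(\vX_h^n))\,\delta_t\R_h\vX^{n},\vxi_h\big)$, where $\delta_t$ denotes the backward difference quotient.
\end{itemize}

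\textbf{Step 2: energy estimate.} Test with $\vxi_h=\ve_h^{n+1}-\ve_h^n$. The left side gives
\[
\Delta t^{-1}\min(1,\alpha)c\,\|\ve_h^{n+1}-\ve_h^n\|^2+\tfrac12\big(\|\partial_s\ve_h^{n+1}\|^2-\|\partial_s\ve_h^n\|^2\big).
\]
Terms (a) and (b) are absorbed by Young's inequality into the first part, leaving $c\,\Delta t(\Delta t+h^{k+1})^2$. Term (c) is the main obstacle. Since $G$ is quadratic in $\partial_s\vb$, the difference $G(\vX)-G(\vX_h^n)$ is linear in $\partial_s(\vX-\vX_h^n)=\partial_s\ve_h^n-\partial_s\vrho_h^n$ plus a quadratic remainder. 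Naively $\partial_s\vrho_h^n=O(h^k)$, which is not enough for optimality.

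The trick, borrowed from \cite{deckelnick2025second}, is to integrate by parts in $s$ in the $\partial_s\vrho_h$ part, moving the derivative onto the smooth factors and onto $\vxi_h$. This produces $\|\vrho_h^n\|\,\|\partial_s\vxi_h\|$ plus $\|\vrho_h^n\|\,\|\vxi_h\|$ (with $\vxi_h$ bounded via $\Delta t$ factors). Because $\vxi_h=\ve_h^{n+1}-\ve_h^n$, we have
\[
\|\partial_s\vxi_h\|\le\|\partial_s\ve_h^{n+1}\|+\|\partial_s\ve_h^n\|,
\]
so these contributions become $ch^{k+1}(\|\partial_s\ve_h^{n+1}\|+\|\partial_s\ve_h^n\|)$. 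As written, this lacks a $\Delta t$ factor. To fix that, I would rewrite the sum over $n$ by summation by parts in time (Abel summation), transferring the difference onto $\vrho_h^n$ and the coefficients, which are $O(\Delta t\,h^{k+1})$. The boundary terms are then controlled by $\epsilon\|\partial_s\ve_h^{m}\|^2+ch^{2k+2}$. For the $\partial_s\ve_h^n$ part, and for the quadratic remainder (controlled using the bootstrap $L^\infty$ bound), one uses
\[
c\|\partial_s\ve_h^n\|\,\|\vxi_h\|\le\epsilon\Delta t^{-1}\|\vxi_h\|^2+c\,\Delta t\|\partial_s\ve_h^n\|^2.
\]

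\textbf{Step 3: closing the induction.} Summing and applying discrete Gronwall gives $\|\partial_s\ve_h^m\|\le c(\alpha)(\Delta t+h^{k+1})$. The constant depends on $\alpha$ through $\min(1,\alpha)^{-1}$. The $L^2$ part follows from the Poincaré inequality, together with control of the mean of $\ve_h^m$. That control comes from testing with constants $\vxi_h$, which bounds the mean increments by $c\|\ve_h^{n+1}-\ve_h^n\|$ plus consistency terms, both already estimated. The bootstrap is recovered for $h\le h_0$ via the inverse estimate, since $(\Delta t+h^{k+1})h^{-1/2}\le h^{1/4}$ under $\Delta t\le h^{1/2}$ small enough. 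Finally, \eqref{est-total} follows from the triangle inequality with the Ritz/interpolation estimate for $\vrho_h$; for $p=0$ this is optimal because $\|\vrho_h\|=O(h^{k+1})$.
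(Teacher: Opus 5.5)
Your proposal follows essentially the paper's proof. The shared steps are:
\begin{itemize}
\item an inverse-estimate induction giving two-sided bounds on $|\partial_s\vX_h^n|$;
\item use of $\R_h$ to eliminate the stiffness consistency error;
\item testing with $\ve_h^{n+1}-\ve_h^n$;
\item splitting $G(\vX_h^n)-G(\vX(t_n))$ into an $\ve_h$-part and a $\vrho_h$-part, with the linear $\partial_s\vrho_h$-terms integrated by parts in $s$ and then Abel-summed in time (the paper's $\vY^n,\vZ^n$);
\item Gronwall, and the $L^2$ bound from the accumulated $\Delta t^{-1}\sum\|\ve_h^{n+1}-\ve_h^n\|^2$.
\end{itemize}
For the last step, the Poincar\'e/mean detour and the testing with constants are unnecessary, since Cauchy--Schwarz on $\sum\|\ve_h^{n+1}-\ve_h^n\|$ already bounds $\|\ve_h^m\|$.

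One small slip: $(\Delta t+h^{k+1})h^{-1/2}\le h^{1/4}$ would need $\Delta t\lesssim h^{3/4}$, not $\Delta t\lesssim h^{1/2}$. Bootstrap instead with a small fixed constant, as in the paper's condition \eqref{cond-1}; that is all the argument actually uses.
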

\begin{proof}
	We only demonstrate the estimate for $\ve_h^m$.  To this end we will show that by choosing proper $K_\alpha$ and $\beta_\alpha$, the following bound holds for small enough $h$ and $\Delta t$
	\begin{equation}\label{est-Euler-R}
		\left\|\partial_s\ve_h^{m}\right\|^2\le K_\alpha^2e^{\beta_\alpha t_{m-1}}((\Delta t)^2+h^{2k+2}),
	\end{equation}
	with $K_\alpha$ and $\beta_\alpha$ two constants independent of $h$ and $\Delta t$ but dependent on $\alpha$.  Suppose $|\partial_s\vX|$ is bounded from both above and below, then by the  estimate 
	\begin{equation*}
		\|\partial_s(\vX-\R_h\vX)\|\le c h^{k}\|\partial_s^{k+1}\vX\|
	\end{equation*}
	and the inverse estimate, we know there exists some $h_0$ such that for $h\le h_0$
	\begin{equation*}
		c_1\le |\partial_s\R_h\vX(t)|\le c_2\quad \mbox{holds }\forall t\le T.
	\end{equation*}
	
	We shall prove \eqref{est-Euler-R} by mathematical induction. Note that $\vX_h^0=\R_h\vX(0)$, so $m=0$ is obviously satisfied. Suppose $m\le n$ is verified.  Applying the triangle inequality and the inverse estimate we have
	\begin{equation*}
		\begin{aligned}
			|\partial_s\vX_h^n|&\le |\partial_s\R_h\vX(t_n)|+|\partial_s \R_h\vX(t_n)-\partial_s\vX_h^{n}|\\
			& \le c_2+ c h^{-1/2}\big\|\partial_s \R_h\vX(t_n)-\partial_s\vX_h^{n}\big\|\\
			&\le c_2+cK_\alpha e^{\frac{\beta_\alpha}{2} T}(h^{-1/2}\Delta t+h^{k+1/2})
		\end{aligned}
	\end{equation*}
	and
	\begin{equation*}
		\begin{aligned}
			|\partial_s\vX_h^n|&\ge |\partial_s\R_h\vX(t_n)|-|\partial_s \R_h\vX(t_n)-\partial_s\vX_h^{n}|\\
			& \ge c_1- c h^{-1/2}\big\|\partial_s \R_h\vX(t_n)-\partial_s\vX_h^{n}\big\|\\
			&\ge c_1-cK_\alpha e^{\frac{\beta_\alpha}{2}T}(h^{-1/2}\Delta t+h^{k+1/2}).
		\end{aligned}
	\end{equation*}
	So for $h\le h_0$ and
	\begin{equation}\label{cond-1}
		c K_\alpha e^{\frac{\beta_\alpha}{2} T}(h^{-1/2}\Delta t+h^{k+1/2})\le c_1/2,
	\end{equation}
	we can conclude
	\begin{equation*}
		\frac{c_1}{2}\le |\partial_s\vX_h^n|\le2c_2.
	\end{equation*}
	Rewrite \eqref{Euler} into the following form
	\begin{equation}\label{re-Euler}
		A(\vX_h^{n+1},\vxi_h):=\left(G(\vX_h^n){\vX_h^{n+1}},\vxi_h\right)+\Delta t \left(\partial_s\vX_h^{n+1},\partial_s\vxi_h\right)=\left(G(\vX_h^n){\vX_h^{n}},\vxi_h\right).
	\end{equation}
	Note that for $s\in (s_{j-1},s_j)$ the $2$ eigenvalues of $G(\vX_h^n)$ are $|\partial_s\vX_h^n|^2$ and $\alpha|\partial_s\vX_h^n|^2$.  Together with the lower and upper bounds for $|\partial_s\vX_h^n|$,  we can conclude the bilinear form $A(\cdot,\cdot)$ is continuous and coercive on $S(I)^2$. The existence and uniqueness of $\vX_h^{n+1}$ follows directly. Next we turn to the error estimates.
	
	Denote by $G^{n}:=G(\vX(t_n))$ and rewrite \eqref{weak-form} at $t=t_{n+1}$ into the following form for $\vxi=\vxi_h\in S(I)^2$
	\begin{equation}\label{re-weak-form}
		\left(G(\vX_h^{n})\delta_t\R_h\vX(t_{n+1}),\vxi_h\right)+\left(\partial_s\R_h\vX(t_{n+1}),\partial_s\vxi_h\right)=\left(\vd^{n+1},\vxi_h\right),
	\end{equation}
	where $\delta_t\R_h\vX(t_{n+1}):=\frac{1}{\Delta t}(\R_h\vX(t_{n+1})-\R_h\vX(t_n))$. The defect term yields 
	\begin{equation*}
		\begin{aligned}
			\left(\vd^{n+1},\vxi_h\right)=& \left(G(\vX_h^n)\delta_t\R_h\vX(t_{n+1})-G^{n+1}\partial_t\vX(t_{n+1}),\vxi_h\right)\\
			&+\left(\partial_s\R_h\vX(t_{n+1})-\partial_s\vX(t_{n+1}),\partial_s\vxi_h\right)\\
			=&\left(G(\vX_h^n)\delta_t\R_h\vX(t_{n+1})-G^{n+1}\partial_t\vX(t_{n+1}),\vxi_h\right),
		\end{aligned}
	\end{equation*}
	where we utilized the property of $\R_h$. 
	Combining \eqref{re-weak-form} with \eqref{Euler} it follows
	\begin{equation}\label{eq-Error}
		\left(G(\vX_h^{n})\delta_t\ve_h^{n+1},\vxi_h\right)+\left(\partial_s\ve_h^{n+1},\partial_s\vxi_h\right)=\left(\vd^{n+1},\vxi_h\right),
	\end{equation}
	where $\delta_t\ve_h^{n+1}:=\frac{\ve_h^{n+1}-\ve_h^n}{\Delta t}$. Take $\vxi_h=\ve_h^{n+1}-\ve_h^n$ then it follows
	\begin{equation}\label{est-1}
		\begin{aligned}
			\frac{1}{\Delta t}\left(G(\vX_h^{n})(\ve_h^{n+1}-\ve_h^n),\ve_h^{n+1} -\ve_h^n\right)&+\frac{1}{2}\left\|\partial_s(\ve_h^{n+1}-\ve_h^n) \right\|^2+\frac{1}{2}\left\|\partial_s\ve_h^{n+1}\right\|^2\\ &=\frac{1}{2}\left\|\partial_s\ve_h^{n}\right\|^2 +\left(\vd^{n+1},\ve_h^{n+1}-\ve_h^n\right).
		\end{aligned}
	\end{equation}
	The first term on the left hand side (LHS) yields
	\begin{equation}\label{est-left}
		\frac{1}{\Delta t}\left(G(\vX_h^{n})(\ve_h^{n+1}-\ve_h^n),\ve_h^{n+1} -\ve_h^n\right)\ge \frac{c_\alpha c_1^{2}}{4\Delta t}\left\|\ve_h^{n+1}-\ve_h^n\right\|^2
	\end{equation}
	with $c_\alpha=\min(1,\alpha)$. Next we focus on the defect term. We split it into the following form
	\begin{equation*}
		\begin{aligned}
			\left(\vd^{n+1},\ve_h^{n+1}-\ve_h^n\right)=&\left(G(\vX_h^n)\delta_t \vrho_h(t_{n+1}),\ve_h^{n+1}-\ve_h^n\right)\\
			&+\left(G(\vX_h^n)\delta_t\vX(t_{n+1})-G(\vX_h^n)\partial_t\vX(t_{n+1}),\ve_h^{n+1}-\ve_h^n\right)\\
			&+\left((G(\vX_h^n)-G^{n})\partial_t\vX(t_{n+1}),\ve_h^{n+1}-\ve_h^n\right)\\
			&+\left((G^n-G^{n+1})\partial_t\vX(t_{n+1}),\ve_h^{n+1}-\ve_h^n\right):=d_{1}+d_{2}+d_{3}+d_{4}.
		\end{aligned}
	\end{equation*}
	By the mean value theorem we know there exists some $t_{n+\theta}$ with $\theta\in (0,1)$ such that
	\begin{equation*}
		\vrho_h(t_{n+1})-\vrho_h(t_n)=\partial_t\vrho_h(t_{n+\theta})\Delta t.
	\end{equation*}
	So appealing to the estimate for $\R_h$ with noting $\max(1,\alpha)\le 1+\alpha$ we have
	\begin{equation}\label{d1}
		\begin{aligned}
			d_{1}&\le \big\|G^{1/2}(\vX_h^n)\partial_t \vrho_h(t_{n+\theta})\big\|  \big\|G^{1/2}(\vX_h^n)(\ve_h^{n+1} -\ve_h^n)\big\|\\
			&\le 4c^2_2(1+\alpha)\big\|\partial_t \vrho_h(t_{n+\theta})\big\|  \big\| \ve_h^{n+1} -\ve_h^n\big\|\\
			&\le c\, c^2_2(1+\alpha) h^{k+1} \big\| \ve_h^{n+1} -\ve_h^n\big\|\\
			&\le \frac{c_\alpha c_1^2}{\Delta t}\epsilon \big\| \ve_h^{n+1} -\ve_h^n\big\|^2+\frac{c(1+\alpha)^2 h^{2k+2}}{c_\alpha\epsilon}\Delta t,
		\end{aligned}
	\end{equation}
	where we utilized $\R_h$ commutes with the time derivative. For $d_2$, Taylor expansion gives
	\begin{equation}\label{d2}
		\begin{aligned}
			d_{2}&\le  \big\|G^{1/2}(\vX_h^n) (\delta_t\vX-\partial_t\vX)(t_{n+1})\big\|  \big\|G^{1/2}(\vX_h^n)(\ve_h^{n+1} -\ve_h^n)\big\|\\
			&\le 4c_2^{2}(1+\alpha) \big\|(\delta_t\vX-\partial_t\vX)(t_{n+1})\big\|  \big\|\ve_h^{n+1} -\ve_h^n\big\| \\
			&\le  c\, c_2^{2}(1+\alpha) \Delta t  \big\|\ve_h^{n+1} -\ve_h^n\big\| \\
			&\le \frac{c_\alpha c_1^2}{\Delta t} \epsilon\big\| \ve_h^{n+1} -\ve_h^n\big\|^2+\frac{c\,(1+\alpha)^2 }{c_\alpha\epsilon}(\Delta t)^3.
		\end{aligned}
	\end{equation}
	For $d_{4}$, it follows from the smoothness of $G(\vX(t))$ in time 
	\begin{equation}\label{d_4}
		d_{4}\le c (1+\alpha)\Delta t\big\|\ve_h^{n+1}-\ve_h^n\big\|\le \frac{c_\alpha c_1^2}{\Delta t} \epsilon\big\| \ve_h^{n+1} -\ve_h^n\big\|^2+\frac{c\,(1+\alpha)^2}{c_\alpha\epsilon}(\Delta t)^3,
	\end{equation}
	where and hereafter, to keep the constants involving $\alpha$ concise,  we always use the fact that any linear combination of $\alpha$ and a constant can be bounded by $ c(1+\alpha)$.
	
	Next let us focus on the term $d_{3}$. We split it into  the following terms
	\begin{equation*}
		\begin{aligned}
			d_{3}=&\left(\big[G(\vX_h^n)-G(\R_h\vX(t_n))\big]\partial_t\vX(t_{n+1}),\ve_h^{n+1}-\ve_h^n\right)\\
			&+\left(\big[G(\R_h\vX(t_n))-G^n\big]\partial_t\vX(t_{n+1}),\ve_h^{n+1}-\ve_h^n\right):=d_{3}^1+d_{3}^2.
		\end{aligned}
	\end{equation*}
	For $d_{3}^1$, note that 
	\begin{equation*}
		\begin{aligned}
			G(\vX_h^n)-G(\R_h\vX(t_n))=&\left(|\partial_s\vX_h^n|^{2}-|\partial_s\R_h\vX(t_n)|^{2}\right)\mI\\
			&+(\alpha-1)\left(\partial_s\vX_h^n\otimes\partial_s\vX_h^n-\partial_s\R_h\vX(t_n)\otimes\partial_s\R_h\vX(t_n)\right)\\
			=&\left(|\partial_s\vX_h^n|^{2}-|\partial_s\R_h\vX(t_n)|^{2}\right)\mI\\
			&+(\alpha-1)\partial_s\vX_h^n\otimes\left(\partial_s\vX_h^n -\partial_s\R_h\vX(t_n)\right)\\
			&+(\alpha-1)\left( \partial_s\vX_h^n-\partial_s\R_h\vX(t_n)\right)\otimes\partial_s\R_hX(t_n)
		\end{aligned}
	\end{equation*}
	and the bound for $|\partial_s\vX_h^n|$, it follows
	\begin{equation}\label{d3-1}
		\begin{aligned}
			d_{3}^1&\le c\, c_2 (1+\alpha)\big\|\partial_s\ve_h^n\big\|\big\|\ve_h^{n+1}-\ve_h^n\big\|\\
			&\le \frac{c_\alpha c_1^2}{\Delta t} \epsilon\big\| \ve_h^{n+1} -\ve_h^n\big\|^2+\frac{c\,(1+\alpha)^2}{c_\alpha \epsilon}\big\|\partial_s\ve_h^{n} \big\|^2\Delta t.
		\end{aligned}
	\end{equation}
	Now let us focus on $d_{ 3}^2$
	\begin{equation*}
		\begin{aligned}
			G(\R_h\vX(t_n))-G^n=&\left(|\partial_s\R_h\vX(t_n)|^2-|\partial_s\vX(t_n)|^2\right)\mI\\
			&+(\alpha-1)\left(\partial_s\R_h\vX(t_n)\otimes\partial_s\R_h\vX(t_n)-\partial_s\vX(t_n)\otimes\partial_s\vX(t_n)\right).
		\end{aligned}
	\end{equation*}
	Substitute $\R_h\vX=\vX+\vrho_h$ then it follows
	\begin{equation*}
		\begin{aligned}
			G(\R_h\vX(t_n))-G^n&=\left(2\partial_s\vX(t_n)\cdot \partial_s\vrho_h^n+|\partial_s\vrho_h^n|^2\right)\mI\\
			&+(\alpha-1)\left(\partial_s\vrho_h^n\otimes\partial_s\vX(t_n)+\partial_s\vX(t_n)\otimes\partial_s\vrho_h^n+\partial_s\vrho_h^n\otimes\partial_s\vrho_h^n\right),
		\end{aligned}
	\end{equation*}
	which splits $d_{3}^2$ into five terms, and we denote them by $\{d_{3}^{2,i}\}_{i=1}^5$, respectively.
	
	\noindent For $d_{3}^{2,2}+d_{3}^{2,5}$ we have
	\begin{equation*}
		\begin{aligned}
			d_{3}^{2,2}+d_{3}^{2,5}&= \left(\left[|\partial_s\vrho_h^n|^2\mI +(\alpha-1) \partial_s\vrho_h^n\otimes\partial_s\vrho_h^n\right]\,
			\partial_t\vX(t_{n+1}),\ve_h^{n+1}-\ve_h^n\right)\\
			&\le (1+|\alpha-1|)\big\|\partial_s\vrho_h^n\big\|^2_{\infty}\big\| \partial_t\vX(t_{n+1})\big\| \big\| \ve_h^{n+1}-\ve_h^n\big\|,
		\end{aligned}
	\end{equation*}
	where $\|\cdot\|_{\infty}$ denotes the $L^{\infty}(I)$ norm. For $k=1$, standard interpolation estimate gives
	\begin{equation}\label{rho-est-k1}
		\big\|\partial_s\vrho_h(t)\big\|_{\infty}=\big\|\partial_s\cI_h\vX(t)-\partial_s\vX(t)\big\|_{\infty}\le ch\big\|\partial_s^{2}\vX(t)\big\|_{\infty}.
	\end{equation}
	For $k\ge 2$, utilizing the  triangle inequality and the inverse estimate it follows
	\begin{equation}\label{rho-est-k2}
		\begin{aligned}
			\big\|\partial_s\vrho_h(t)\big\|_{\infty}\le & \big\| \partial_s\Ritz_h\vX(t)-\partial_s\cI_h\vX(t)\big\|_\infty +\big\| \partial_s\ \cI_h\vX(t)-\vX(t)\big\|_\infty\\
			\le & ch^{-\half}\big\| \partial_s\Ritz_h\vX(t)-\partial_s\cI_h\vX(t)\big\|+ ch^{k}\big\|\partial_s^{k+1}\vX(t)\big\|_{\infty} \\
			\le & ch^{k-\half}\big\|\partial_s^{k+1}\vX(t)\big\|+ch^{k}\big\|\partial_s^{k+1}\vX(t)\big\|_{\infty},
		\end{aligned}
	\end{equation}
	where the last step follows from 
	\begin{equation*}
		\big\| \partial_s\Ritz_h\vX(t)-\partial_s\cI_h\vX(t)\big\|\le \big\| \partial_s\Ritz_h\vX(t)-\partial_s \vX(t)\big\|+\big\| \partial_s\cI_h\vX(t)-\partial_s \vX(t)\big\|
	\end{equation*}
	and the $H^1$ estimates of $\Ritz_h$ and $\cI_h$. Note that $2k-1\ge k+1$ for $k\ge 2$, so combining \eqref{rho-est-k1} and \eqref{rho-est-k2} we get for $k\ge 1$ 
	\begin{equation}\label{est-pho-inf}
		\big\|\partial_s\vrho_h(t)\big\|_{\infty}^2\le ch^{k+1},
	\end{equation}
	which gives
	\begin{equation*}
		\begin{aligned}
			d_{3}^{2,2}+d_{3}^{2,5}\le c(1+\alpha)h^{k+1}\big\|\ve_h^{n+1}-\ve_h^n\big\|.
		\end{aligned}
	\end{equation*}
	
	\noindent For the term $d_{3}^{2,1}$, integration by parts gives
	\begin{equation*}
		\begin{aligned}
			d_{3}^{2,1} =&-2\left(\partial_s^2\vX(t_n)\cdot \vrho_h^n\,\partial_t\vX(t_{n+1}),\ve_h^{n+1}-\ve_h^n\right)\\
			&-2\left(\partial_s\vX(t_n)\cdot \vrho_h^n\,\partial_s\partial_t\vX(t_{n+1}),\ve_h^{n+1}-\ve_h^n\right)\\
			&-2\left(\partial_s\vX(t_n)\cdot\vrho_h^n\,\partial_t\vX(t_{n+1}),\partial_s\ve_h^{n+1}-\partial_s\ve_h^n\right)\\
			\le & \,c\big\|\vrho_h^n\big\|\, \big\|\ve_h^{n+1}-\ve_h^n\big\|-2\left(\vY^{n+1},\partial_s\ve_h^{n+1}\right)+2\left(\vY^n,\partial_s\ve_h^{n}\right)
			+2\left(\vY^{n+1}-\vY^n,\partial_s\ve_h^{n+1}\right)\\
			\le & \,ch^{k+1} \big\|\ve_h^{n+1}-\ve_h^n\big\|+ch^{k+1}\Delta t \big\|\partial_s\ve_h^{n+1}\big\| -2\left(\vY^{n+1},\partial_s\ve_h^{n+1}\right)+2\left(\vY^n,\partial_s\ve_h^{n}\right),
		\end{aligned}
	\end{equation*}
	where for simplicity we denote by $\vY^n=\partial_s\vX(t_{n})\cdot\vrho_h^{n}\,\partial_t\vX(t_{n+1})$. Applying integration by parts for $d_{3}^{2,3}$ it yields
	\begin{equation*}
		\begin{aligned}
			d_{3}^{2,3}=&-(\alpha-1)\left(\vrho_h^n\otimes\partial_s^2\vX(t_n)\,\partial_t\vX(t_{n+1}),\ve_h^{n+1}-\ve_h^n\right)\\
			&-(\alpha-1)\left(\vrho_h^n\otimes\partial_s\vX(t_n)\,\partial_s\partial_t\vX(t_{n+1}),\ve_h^{n+1}-\ve_h^n\right)\\
			&-(\alpha-1)\left(\vrho_h^n\otimes\partial_s\vX(t_n)\,\partial_t\vX(t_{n+1}),\partial_s\ve_h^{n+1}-\partial_s\ve_h^n\right)\\
			\le &  \,c|1-\alpha| \big\|\vrho_h^n\big\|\, \big\|\ve_h^{n+1}-\ve_h^n\big\|-(\alpha-1)\left(\vZ_1^{n+1},\partial_s\ve_h^{n+1}\right)+(\alpha-1)\left(\vZ_1^{n},\partial_s\ve_h^{n}\right)\\
			&+(\alpha-1)\left(\vZ_1^{n+1}-\vZ_1^n,\partial_s\ve_h^{n+1}\right)\\
			\le &\,c|1-\alpha|h^{k+1} \big\|\ve_h^{n+1}-\ve_h^n\big\|+ch^{k+1}\Delta t \big\|\partial_s\ve_h^{n+1}\big\|\\
			& -(\alpha-1)\left(\vZ_1^{n+1},\partial_s\ve_h^{n+1}\right)+(\alpha-1)\left(\vZ_1^{n},\partial_s\ve_h^{n}\right),
		\end{aligned}
	\end{equation*}
	where $\vZ_1^n:=\vrho_h^n\otimes\partial_s\vX(t_n)\,\partial_t\vX(t_{n+1})$. Similarly
	\begin{equation*}
		\begin{aligned}
			d_{3}^{2,4}\le  &\,c|1-\alpha|h^{k+1} \big\|\ve_h^{n+1}-\ve_h^n\big\|+ch^{k+1}\Delta t \big\|\partial_s\ve_h^{n+1}\big\|\\
			& -(\alpha-1)\left(\vZ_2^{n+1},\partial_s\ve_h^{n+1}\right)+(\alpha-1)\left(\vZ_2^{n},\partial_s\ve_h^{n}\right),
		\end{aligned}
	\end{equation*}
	where  $\vZ_2^n:=\partial_s\vX(t_n)\otimes\vrho_h^n \,\partial_t\vX(t_{n+1})$. Sum them up we get 
	\begin{equation}\label{d3-2}
		\begin{aligned}
			d_{3}^2\le & c\,(1+\alpha) h^{k+1} \big\|\ve_h^{n+1}-\ve_h^n\big\|+ch^{k+1}\Delta t \big\|\partial_s\ve_h^{n+1}\big\|\\
			&-2\left(\vY^{n+1},\partial_s\ve_h^{n+1}\right)-(\alpha-1)\left(\vZ^{n+1},\partial_s\ve_h^{n+1}\right)\\
			&+2\left(\vY^{n},\partial_s\ve_h^{n}\right)+(\alpha-1)\left(\vZ^{n},\partial_s\ve_h^{n}\right)\\
			\le & \frac{c_\alpha c_1^2}{\Delta t}\epsilon \big\| \ve_h^{n+1} -\ve_h^n\big\|^2+c\frac{(1+\alpha)^2}{c_\alpha \epsilon}h^{2k+2}\Delta t+ch^{k+1}\Delta t \big\|\partial_s\ve_h^{n+1}\big\|\\
			&-2\left(\vY^{n+1},\partial_s\ve_h^{n+1}\right)-(\alpha-1)\left(\vZ^{n+1},\partial_s\ve_h^{n+1}\right)\\
			&+2\left(\vY^{n},\partial_s\ve_h^{n}\right)+(\alpha-1)\left(\vZ^{n},\partial_s\ve_h^{n}\right),
		\end{aligned}
	\end{equation}
	where $\vZ^n=\vZ_1^n+\vZ_2^n$. In conclusion, we have
	\begin{equation}\label{est-d}
		\begin{aligned}
			\sum_{i=1}^4 d_i\le&  5\frac{c_\alpha c_1^2}{\Delta t}\epsilon \big\| \ve_h^{n+1} -\ve_h^n\big\|^2+ch^{k+1}\Delta t \big\|\partial_s\ve_h^{n+1}\big\|+c\frac{(1+\alpha)^2}{c_\alpha\epsilon}\Delta t \big\|\partial_s\ve_h^{n} \big\|^2\\
			&-2\left(\vY^{n+1},\partial_s\ve_h^{n+1}\right)-(\alpha-1)\left(\vZ^{n+1},\partial_s\ve_h^{n+1}\right)\\
			&+2\left(\vY^{n},\partial_s\ve_h^{n}\right)+(\alpha-1)\left(\vZ^{n},\partial_s\ve_h^{n}\right)\\
			&+c\frac{(1+\alpha)^2}{c_\alpha \epsilon}(\Delta t)^3 +c\frac{(1+\alpha)^2}{c_\alpha  \epsilon}h^{2k+2}\Delta t.
		\end{aligned}
	\end{equation}
	Appealing to \eqref{est-1} and \eqref{est-left},  taking $\epsilon=\frac{1}{40}$, the first term on the right hand side of \eqref{est-d} can be absorbed and the remaining terms satisfy
	\begin{equation}\label{est-f1}
		\begin{aligned}
			&\frac{c_\alpha c_1^2}{8\Delta t} \big\| \ve_h^{n+1} -\ve_h^n\big\|^2  +\frac{1}{2}\left\|\partial_s(\ve_h^{n+1}-\ve_h^n) \right\|^2+\frac{1}{2}\left\|\partial_s\ve_h^{n+1}\right\|^2\\ &+2\left(\vY^{n+1},\partial_s\ve_h^{n+1}\right)+(\alpha-1)\left(\vZ^{n+1},\partial_s\ve_h^{n+1}\right)\\
			\le & \frac{1}{2}\left\|\partial_s\ve_h^{n}\right\|^2 +\,ch^{k+1}\Delta t \big\|\partial_s\ve_h^{n+1}\big\|+c\frac{(1+\alpha)^2}{c_\alpha}\Delta t \big\|\partial_s\ve_h^{n} \big\|^2\\
			&+2\left(\vY^{n},\partial_s\ve_h^{n}\right)+(\alpha-1)\left(\vZ^{n},\partial_s\ve_h^{n}\right)\\
			&+c\frac{(1+\alpha)^2}{c_\alpha}(\Delta t)^3 +c\frac{(1+\alpha)^2}{c_\alpha}h^{2k+2}\Delta t.
		\end{aligned}
	\end{equation}
	Utilizing $ch^{k+1}\Delta t \big\|\partial_s\ve_h^{n+1}\big\|\le \Delta t \big\|\partial_s\ve_h^{n+1}\big\|^2+c h^{2k+2}\Delta t$,  summing up the inequality for $n$ it follows
	\begin{equation}\label{est-f2}
		\begin{aligned}
			&\frac{c_\alpha c_1^2}{8\Delta t} \sum_{m=0}^n \big\| \ve_h^{m+1} -\ve_h^m\big\|^2  +\frac{1}{2}\left\|\partial_s\ve_h^{n+1}\right\|^2 +\frac{1}{2}\sum_{m=0}^{n}\left\|\partial_s(\ve_h^{m+1}-\ve_h^m) \right\|^2\\ &+2\left(\vY^{n+1},\partial_s\ve_h^{n+1}\right)+(\alpha-1)\left(\vZ^{n+1},\partial_s\ve_h^{n+1}\right)\\
			\le & \Delta t \big\|\partial_s\ve_h^{n+1}\big\|^2+ c\frac{(1+\alpha)^2}{c_\alpha} \Delta t\sum_{m=1}^n \big\|\partial_s\ve_h^{m} \big\|^2+\tilde{c}\frac{(1+\alpha)^2}{c_\alpha}\big((\Delta t)^2+ h^{2k+2}\big),
		\end{aligned}
	\end{equation}
	where we utilized $\ve_h^0=\vec{0}$. Note that 
	\begin{equation*}
		2\left|\left(\vY^{n+1},\partial_s\ve_h^{n+1}\right) \right|\le ch^{k+1}\left\|\partial_s\ve_h^{n+1}\right\| \le \frac{1}{8}\left\|\partial_s\ve_h^{n+1}\right\|^2+ch^{2k+2}
	\end{equation*}
	and
	\begin{equation*}
		\left|(\alpha-1)\left(\vZ^{n+1},\partial_s\ve_h^{n+1}\right) \right| \le c|\alpha-1| h^{k+1}\left\|\partial_s\ve_h^{n+1}\right\| \le \frac{1}{8}\left\|\partial_s\ve_h^{n+1}\right\|^2+c(1+\alpha)^2h^{2k+2},
	\end{equation*}
	then we get from \eqref{est-f2}
	\begin{equation}\label{est-f3}
		\begin{aligned}
			&\frac{c_\alpha c_1^2}{8\Delta t} \sum_{m=0}^n \big\| \ve_h^{m+1} -\ve_h^m\big\|^2  +\frac{1}{4}\left\|\partial_s\ve_h^{n+1}\right\|^2 +\frac{1}{2}\sum_{m=0}^{n}\left\|\partial_s(\ve_h^{m+1}-\ve_h^m) \right\|^2\\
			\le & \Delta t \big\|\partial_s\ve_h^{n+1}\big\|^2+ c\frac{(1+\alpha)^2}{c_\alpha} \Delta t\sum_{m=1}^n \big\|\partial_s\ve_h^{m} \big\|^2+\tilde{c}\frac{(1+\alpha)^2}{c_\alpha}\big((\Delta t)^2+ h^{2k+2} \big).
		\end{aligned}
	\end{equation}
	Let $\Delta t\le \frac{1}{8}$ then \eqref{est-f3} implies
	\begin{equation}\label{est-f4}
		\begin{aligned}
			&\frac{c_\alpha c_1^2}{\Delta t} \sum_{m=0}^n \big\| \ve_h^{m+1} -\ve_h^m\big\|^2 +\left\|\partial_s\ve_h^{n+1}\right\|^2 \\
			\le &  c\frac{(1+\alpha)^2}{c_\alpha} \Delta t\sum_{m=1}^n \big\|\partial_s\ve_h^{m} \big\|^2+\tilde{c}\frac{(1+\alpha)^2}{c_\alpha}\big((\Delta t)^2+ h^{2k+2} \big).
		\end{aligned}
	\end{equation}
	Discrete Gronwall's inequality gives
	\begin{equation}\label{est-en-H1}
		\left\|\partial_s\ve_h^{n+1}\right\|^2 \le \tilde{c}\frac{(1+\alpha)^2}{c_\alpha}\big((\Delta t)^2+ h^{2k+2} \big)e^{\frac{c(1+\alpha)^2}{c_\alpha}t_{n}}.
	\end{equation}
	So by choosing $K_\alpha^2= \tilde{c}\frac{(1+\alpha)^2}{c_\alpha}$ and $\beta_\alpha=c\frac{(1+\alpha)^2}{c_\alpha}$ the desired estimate of $\partial_s\ve_h^m$ for $m=n+1$ follows. 
	
	Next we turn to the $L^2$-error estimate. Substitute the bound $\|\partial_s\ve_h^m\|^2$  into \eqref{est-f4} then we get
	\begin{equation*}
		\frac{1}{\Delta t} \sum_{m=0}^n \big\| \ve_h^{m+1} -\ve_h^m\big\|^2\le c(\alpha) \big((\Delta t)^2+h^{2k+2}\big).
	\end{equation*}
	Suppose $\|\ve_h^{l}\|=\max_{n}\|\ve_h^n\|$, then
	\begin{equation*}
		\begin{aligned}
			\big\| \ve_h^{l}\big\|^2&=\sum_{n=0}^{l-1}\big(\|\ve_h^{n+1}\|^2- \|\ve_h^n\|^2 \big)\\
			&=\sum_{n=0}^{l-1}\big(\|\ve_h^{n+1}\|- \|\ve_h^n\| \big)\big(\|\ve_h^{n+1}\|+\|\ve_h^n\| \big)\\
			&\le 2 \|\ve_h^l\|\sum_{n=0}^{l-1}\|\ve_h^{n+1}-\ve_h^n\|\\
			&\le  2\|\ve_h^l\|\sum_{n=0}^{l-1}\sqrt{\Delta t}\,\frac{\|\ve_h^{n+1}-\ve_h^n\|}{\sqrt{\Delta t}}\\
			&\le 2\sqrt{T}\|\ve_h^l\| \left(\sum_{n=0}^{l-1}\,\frac{\|\ve_h^{n+1}-\ve_h^n\|^2}{\Delta t}\right)^{1/2},
		\end{aligned}
	\end{equation*}
	which gives
	\begin{equation}\label{est-L2}
		\big\| \ve_h^{l}\big\| \le 2\sqrt{T} \left(\sum_{n=0}^{l-1}\,\frac{\|\ve_h^{n+1}-\ve_h^n\|^2}{\Delta t}\right)^{1/2}\le c(\alpha) \big(\Delta t+h^{k+1}\big).
	\end{equation}
	Thus the desired estimate for $\ve_h^m$ follows.
\end{proof}

\begin{remark}\label{remark-e0}
	In fact, if we set $n=0$ in \eqref{est-f1} with noting that $\ve_h^0=\vec{0}$ then it is easy to get 
	\begin{equation*}
		\frac{1}{\Delta  t}\|\ve_h^1\|^2+ \|\partial_s\ve_h^1\|^2\le \sigma_\alpha (h^{2k+2}+(\Delta t)^3),
	\end{equation*}
	where $\sigma_\alpha$ is a constant independent of $\Delta t$ and $h$ but dependent on $\alpha$.
\end{remark}

\section{Crank-Nicolson scheme}\label{sec-CN}
In this section we consider the following  linearized Crank-Nicolson scheme: Find $\vX_h^{n+1}\in S(I)^2$ with  $\vX_h^0=\R_h\vX(0)$ such that
\begin{equation}\label{CN}
	\left(G(\hX_h^{n+\half})\delta_t\vX_h^{n+1},\vxi_h\right)+\left(\partial_s\vX_h^{n+\half},\partial_s\vxi_h\right)=0\quad \forall \vxi_h\in S(I)^2,
\end{equation}
where $\vX_h^{n+\half}:=\frac{1}{2}(\vX_h^{n+1}+\vX_h^n)$ for $n\ge 0$ and $\hX_h^{n+\half}=\frac{3}{2}\vX_h^n-\frac{1}{2}\vX_h^{n-1}$ for $n\ge 1$. 
For $n=0$, instead of the extrapolation we set $\hX_h^{\half}$ to be the solution obtained by our Euler scheme with a time step of $\frac{1}{2}\Delta t$. Like before, to present the error estimate we denote  
\begin{equation*}
	\ve_h^n=\R_h\vX(t_n)-\vX_h^n\quad \mbox{and}\quad 	\ve_h^{n+\half}=\frac{\ve_h^{n+1}+\ve_h^n}{2}\quad\mbox{for }n\ge 0.
\end{equation*}
\begin{theorem}\label{est-CN-H1}
	Suppose \eqref{Elliott-Frit-1} has a unique solution $\vX(t,s)$ which is smooth enough with respect to both $t$ and $s$ up to  some $t> T$. Then there exists  some $h_0$ such that for $h\le h_0$ and $t\le h^{1/3}$ being small enough, our Crank-Nicolson scheme \eqref{CN} has a unique solution and satisfies
	\begin{equation}\label{est-CN}
		\big\|\ve_h^m \big\| + \big\|\partial_s\ve_h^{m}\big\|\le c(\alpha)\left((\Delta t)^2+h^{k+1}\right) \quad \mbox{for }m\Delta t\le T,
	\end{equation}
	where $c(\alpha)$ is a constant that is independent of $\Delta t$ and $h$ but depends on $\alpha$. Together with the estimate for $\vrho_h$ we have the optimal error bound
	\begin{equation}\label{est-CN-total}
		\big\|\partial_s^p(\vX_h^m-\vX(t_m)) \big\| \le  c(\alpha)\left((\Delta t)^2 +h^{k+1-p}\right),\quad p=0,1.
	\end{equation}
\end{theorem}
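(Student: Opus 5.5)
We mirror the proof of Theorem \ref{est-Euler-H1} and show by induction, for suitable $K_\alpha,\beta_\alpha$, the bound
\begin{equation*}
\big\|\partial_s\ve_h^{m}\big\|^2\le K_\alpha^2e^{\beta_\alpha t_{m}}\big((\Delta t)^4+h^{2k+2}\big).
\end{equation*}
The base cases come from $\ve_h^0=\vec 0$ and from Remark \ref{remark-e0}. The remark is applied with step $\frac12\Delta t$ to the starting value $\hX_h^{\half}$, giving $\|\partial_s(\R_h\vX(t_{1/2})-\hX_h^{\half})\|\le c(h^{k+1}+(\Delta t)^{3/2})$. This suffices because $\hX_h^{\half}$ enters only through the coefficient $G$, and there it is multiplied by a factor $\Delta t$ at $n=0$.

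Under the induction hypothesis, the inverse estimate $\|\cdot\|_\infty\le ch^{-1/2}\|\cdot\|$ gives $c_1/2\le|\partial_s\hX_h^{n+\half}|\le 2c_2$. This uses the mesh condition $h^{-1/2}(\Delta t)^2\to0$, which is where the restriction $\Delta t\le h^{1/3}$ (small enough) enters. Unique solvability then follows from coercivity of $(G(\hX_h^{n+\half})\cdot,\cdot)+\frac{\Delta t}{2}(\partial_s\cdot,\partial_s\cdot)$.

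For the error equation, we write \eqref{weak-form} at $t_{n+\half}$ for $\R_h\vX$:
\begin{equation*}
\big(G(\hX_h^{n+\half})\delta_t\ve_h^{n+1},\vxi_h\big)+\big(\partial_s\ve_h^{n+\half},\partial_s\vxi_h\big)=(\vd^{n+1},\vxi_h).
\end{equation*}
The mismatch $\partial_s\vX^{n+\half}-\partial_s\vX(t_{n+\half})=O((\Delta t)^2)$ goes into the defect after integrating by parts in space. We test with $\vxi_h=\ve_h^{n+1}-\ve_h^n$, so that $(\partial_s\ve_h^{n+\half},\partial_s(\ve_h^{n+1}-\ve_h^n))=\frac12(\|\partial_s\ve_h^{n+1}\|^2-\|\partial_s\ve_h^{n}\|^2)$ telescopes exactly. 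The left side again controls $\frac{c_\alpha c_1^2}{4\Delta t}\|\ve_h^{n+1}-\ve_h^n\|^2$.

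We split the defect as $d_1,\dots,d_4$, with $\hat G^{n+\half}:=G(\tfrac32\vX(t_n)-\tfrac12\vX(t_{n-1}))$ in place of $G^n$:
\begin{itemize}
\item $d_1$ (Ritz error in $\delta_t\vrho_h$) is handled as before.
\item $d_2$ satisfies $\delta_t\vX-\partial_t\vX(t_{n+\half})=O((\Delta t)^2)$ by the midpoint Taylor expansion.
\item $d_4$ satisfies $\hat G^{n+\half}-G(\vX(t_{n+\half}))=O((\Delta t)^2)$ by extrapolation accuracy.
\end{itemize}
Each is bounded by $\frac{\epsilon}{\Delta t}\|\ve_h^{n+1}-\ve_h^n\|^2+c\Delta t((\Delta t)^4+h^{2k+2})$.

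The term $d_3$ involves $G(\hX_h^{n+\half})-\hat G^{n+\half}$. It splits into the part from $\frac32\ve_h^n-\frac12\ve_h^{n-1}$, bounded by $c\Delta t(\|\partial_s\ve_h^n\|^2+\|\partial_s\ve_h^{n-1}\|^2)$, and the $\vrho_h$ part. For the $\vrho_h$ part, the quadratic pieces use \eqref{est-pho-inf}, and the linear pieces are integrated by parts in $s$, exactly as for $d_3^{2}$ in the Euler proof. This moves the derivative onto the test function, producing the telescoping differences $(\vY^{n+1},\partial_s\ve_h^{n+1})-(\vY^n,\partial_s\ve_h^n)$ plus $O(\Delta t\,h^{k+1}\|\partial_s\ve_h^{n+1}\|)$ remainders. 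These are now built from $\frac32\vrho_h^n-\frac12\vrho_h^{n-1}$ and smooth factors at $t_{n+\half}$, which differ between consecutive steps by $O(\Delta t)$ in $L^2$ with an extra $h^{k+1}$.

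Summing over $n$ absorbs the $\epsilon$ terms and bounds the boundary terms $(\vY^{n+1},\partial_s\ve_h^{n+1})$ by $\frac18\|\partial_s\ve_h^{n+1}\|^2+ch^{2k+2}$. The discrete Gronwall inequality then closes the induction and also yields $\frac1{\Delta t}\sum\|\ve_h^{m+1}-\ve_h^m\|^2\le c(\alpha)((\Delta t)^4+h^{2k+2})$. The $L^2$ bound follows by the same max-index argument as in \eqref{est-L2}, and \eqref{est-CN-total} follows by adding the $\vrho_h$ estimates.

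The main obstacle is $d_3$. The term $\big(G(\hX_h^{n+\half})-\hat G^{n+\half}\big)\partial_t\vX$ tested against $\ve_h^{n+1}-\ve_h^n$ only gives $O(\Delta t)$ accuracy if treated naively. We rely on $\hat G$ being a second-order extrapolation, so that the residual pairs with $\|\ve_h^{n+1}-\ve_h^n\|/\sqrt{\Delta t}$ carrying the full $(\Delta t)^2$ factor. We also must check that the summation-by-parts remainders involving $\vrho_h^{n-1}$ still telescope, with errors of size $(\Delta t)h^{k+1}$ and no worse. If the $(\Delta t)^{3/2}$ starting error proves insufficient at $n=0,1$, we would instead take $\hX_h^{\half}$ as one CN-type predictor step.
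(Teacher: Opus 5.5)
Your proposal is correct and follows essentially the paper's proof: you test with $\ve_h^{n+1}-\ve_h^n$ and split the defect in the same way, give a Gronwall-absorbable bound for the part of $d_3$ coming from $\frac32\ve_h^n-\frac12\ve_h^{n-1}$, use spatial integration by parts plus summation by parts in time for the linear $\vrho_h$-terms (the paper's $\vmY^n,\vmZ^n$), and start up with Remark~\ref{remark-e0} at step $\frac12\Delta t$, whose $O(h^{k+1}+(\Delta t)^{3/2})$ error is harmless once Young's inequality supplies the factor $\Delta t$. Two corrections to your commentary: the condition $\Delta t\le c\,h^{1/3}$ actually comes from needing $h^{-1/2}(\Delta t)^{3/2}$ small to bound $|\partial_s\hX_h^{\half}|$ above and below for the Euler predictor (the induction step alone only needs $h^{-1/2}(\Delta t)^2$ small, i.e.\ $\Delta t\ll h^{1/4}$), and what rescues $d_3$ is the spatial integration by parts (a naive bound loses a power of $h$, not of $\Delta t$), whereas the second-order extrapolation is what makes $d_4$ of size $(\Delta t)^2$.
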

\begin{proof}
	The existence and uniqueness follows directly as before by noting at each time level the numerical scheme is an elliptic system. Next we sketch the error estimate. The error equation for our Crank-Nicolson scheme is 
	\begin{equation}\label{eqn-err}
		\left(G(\hX_h^{n+\half})\delta_t \ve_h^{n+1},\vxi_h\right)+\left(\partial_s\ve_h^{n+\half},\partial_s\vxi_h\right)=(d_{n+1},\vxi_h),
	\end{equation}
	where 
	\begin{equation*}
		\begin{aligned}
			d_{n+1}= & G(\hX_h^{n+\half})\delta_t\R_h\vX(t_{n+1})-G^{n+\half}\partial_t\vX(t_{n+\half})\\
			& +\frac{\partial_s\R_h\vX(t_{n+1}) +\partial_s\R_h\vX(t_n)}{2}-\partial_s\R_h\vX(t_{n+\half})
		\end{aligned}
	\end{equation*}
	and $G^{n+\half}:=G(\vX(t_{n+\half}))$. Like before, we will still first prove by mathematical induction that there exist some $K_\alpha$ and $\beta_\alpha$ such that
	\begin{equation}\label{est-CN-R}
		\left\|\partial_s\ve_h^{m}\right\|^2\le K^2_\alpha e^{\beta_\alpha t_{m-1}}((\Delta t)^4+h^{2k+2})\quad \mbox{for }m\ge 1.
	\end{equation}
	
	We leave the case of $m=1$ at the end since the corresponding procedures are similar, though the prediction $\hX_h^{\half}$ is from our Euler method rather than the linear extrapolation.  Suppose $m\le n$ is verified then by the triangle inequality and the inverse estimate we can get
	\begin{equation*}
		\frac{c_1}{2}\le |\partial_s\hX_h^{n+\half}|\le 2c_2
	\end{equation*}
	when $h^{k+\half}$ and $h^{-\half}(\Delta t)^2$ are small enough. 
	
	Denote $R_h(t)=\R_h\vX(t)$ and for notational simplicity, we use $R_h^{n+\theta}$ to represent $R_h(t_{n+\theta})$ for $n\ge 0$ and $\theta\in(0,1)$. Furthermore, we let $\hRh^{n+\half}=\frac{3}{2}R_h^{n}-\frac{1}{2}R_h^{n-1}(n\ge 1)$ be the linear extrapolation. Take $\vxi_h=\ve_h^{n+1}-\ve_h^n$ then the  LHS yields
	\begin{equation}\label{left-side}
		\begin{aligned}
			\mathrm{LHS}\ge\frac{c_\alpha c_1^2}{4\Delta t}\big\|\ve_h^{n+1}-\ve_h^n\big\|^2+\frac{1}{2}\big\|\partial_s \ve_h^{n+1}\big\|^2-\frac{1}{2}\big\|\partial_s\ve_h^{n}\big\|^2
		\end{aligned}
	\end{equation}
	and the defect term is
	\begin{equation*}
		\begin{aligned}
			\left(d_{n+1},\ve_h^{n+1}-\ve_h^n\right)&=\left(G(\hX_h^{n+\half})\delta_t\vrho_h^{n+1},\ve_h^{n+1}-\ve_h^n\right)\\
			&+\left( G(\hX_h^{n+\half})\big(\delta_t\vX(t_{n+1})-\partial_t\vX(t_{n+\half})\big),\ve_h^{n+1}-\ve_h^n\right)\\
			&+\left(\big[G(\hX_h^{n+\half})-G(\hX^{n+\half})\big]\partial_t\vX(t_{n+\half}),\ve_h^{n+1}-\ve_h^n\right)\\
			&+\left(\big[G(\hX^{n+\half})-G^{n+\half}\big]\partial_t\vX(t_{n+\half}),\ve_h^{n+1}-\ve_h^n\right)\\
			&+\left(\frac{1}{2}\partial_s (R_h^{n+1}+R_h^n)-\partial_s R_h^{n+\half},\partial_s\ve_h^{n+1}-\partial_s \ve_h^n\right)\\
			:&=d_{1}+d_{2}+d_{3}+d_4+d_5,
		\end{aligned}
	\end{equation*}
	where $\hX^{n+\half}=\frac{3}{2}\vX(t_n)-\frac{1}{2}\vX(t_{n-1})$. Similar to the Eulerian procedures we can get 
	\begin{equation}\label{d1-CN}
		d_1 \le \frac{c_\alpha c_1^2}{\Delta t}\epsilon \big\| \ve_h^{n+1} -\ve_h^n\big\|^2+\frac{c(1+\alpha)^2 h^{2k+2}}{c_\alpha\epsilon}\Delta t,
	\end{equation}
	\begin{equation}\label{d2-CN}
		d_{2}\le  \frac{c_\alpha c_1^2}{\Delta t} \epsilon\big\| \ve_h^{n+1} -\ve_h^n\big\|^2+\frac{c\,(1+\alpha)^2 }{c_\alpha\epsilon}(\Delta t)^5,
	\end{equation}
	and
	\begin{equation}\label{d4-CN}
		d_{4}\le c (1+\alpha)(\Delta t)^2\big\|\ve_h^{n+1}-\ve_h^n\big\|\le \frac{c_\alpha c_1^2}{\Delta t} \epsilon\big\| \ve_h^{n+1} -\ve_h^n\big\|^2+\frac{c\,(1+\alpha)^2}{c_\alpha\epsilon}(\Delta t)^5.
	\end{equation}
	For $d_5$, Taylor expansion gives
	\begin{equation*}
		\begin{aligned}
			\frac{1}{2}(\partial_s R_h^{n+1}+\partial_s R_h^n)-\partial_s R_h^{n+\half}&=\frac{(\Delta t)^2}{16}\partial_s \left(\partial_t^{2}\R_h\vX(t_{n+\theta_1})+\partial_t^{2}\R_h\vX(t_{n+\theta_2})\right)\\
			&=\frac{(\Delta t)^2}{16}\partial_s \left(\R_h\partial_t^{2}\vX(t_{n+\theta_1})+\R_h\partial_t^{2}\vX(t_{n+\theta_2})\right),
		\end{aligned}
	\end{equation*}
	where $\theta_1,\theta_2\in(0,1)$, and we utilized that $\partial_t$ commutes with $\R_h$. So appealing to the definition of $\R_h$, using integration by parts  we get
	\begin{equation}\label{d5-CN}
		\begin{aligned}
			d_5=&\frac{(\Delta t)^2}{16} \left(\partial_s\R_h\partial_t^{2}\vX(t_{n+\theta_1})+\partial_s\R_h\partial_t^{2}\vX(t_{n+\theta_2}),\partial_s\ve_h^{n+1}-\partial_s \ve_h^n \right)\\
			= & \frac{(\Delta t)^2}{16} \left(\partial_s \partial_t^{2}\vX(t_{n+\theta_1})+\partial_s \partial_t^{2}\vX(t_{n+\theta_2}),\partial_s\ve_h^{n+1}-\partial_s \ve_h^n \right)\\
			=&\frac{(\Delta t)^2}{16} \left(-\partial_s^2 \partial_t^{2}\vX(t_{n+\theta_1})-\partial_s^2 \partial_t^{2}\vX(t_{n+\theta_2}),\ve_h^{n+1}-\ve_h^n \right)\\
			\le & c (\Delta t)^2 \big\|\ve_h^{n+1}-\ve_h^n \big\| \\
			\le &  \frac{c_\alpha c_1^2}{\Delta t}\epsilon \big\| \ve_h^{n+1} -\ve_h^n\big\|^2+\frac{c}{c_\alpha\epsilon}(\Delta t)^5.
		\end{aligned}
	\end{equation}
	Now  we split $d_3$ it into 
	\begin{equation}\label{d3}
		\begin{aligned}
			d_3=&\left(\big[G(\hX_h^{n+\half})-G(\hRh^{n+\half})\big]\partial_t\vX(t_{n+\half}),\ve_h^{n+1}-\ve_h^n\right)\\
			&+\left(\big[ G(\hRh^{n+\half})-G(\hX^{n+\half})\big]\partial_t\vX(t_{n+\half}),\ve_h^{n+1}-\ve_h^n\right)\\
			:&=d_{3}^1+d_{3}^2.
		\end{aligned}
	\end{equation}
	Repeat the same arguments as for $d_3^1$ in the Eulerian case then we get
	\begin{equation}\label{d31}
		\begin{aligned}
			d_{3}^1\le &c (1+\alpha)\big\|\partial_s(\hX_h^{n+\half}-\hRh^{n+\half}) \big\| \big\|\ve_h^{n+1}-\ve_h^n\big\|\\
			\le & \frac{c_\alpha c_1^2}{\Delta t} \epsilon\big\| \ve_h^{n+1} -\ve_h^n\big\|^2+\frac{c\,(1+\alpha)^2}{c_\alpha \epsilon}\left(\big\|\partial_s\ve_h^{n} \big\|^2+\big\|\partial_s\ve_h^{n-1} \big\|^2\right)\Delta t.
		\end{aligned}
	\end{equation}
	For $d_3^2$ we substitute $\hRh^{n+\half}=\hX^{n+\half}+\hrho_h^{n+\half}$ then
	\begin{equation*}
		\begin{aligned}
			& G(\hRh^{n+\half})-G(\hX^{n+\half})\\
			=&\left(2\partial_s\hX^{n+\half}\cdot \partial_s\hrho_h^{n+\half}+|\partial_s\hrho_h^{n+\half}|^2\right)\mI\\
			&+(\alpha-1)\left(\partial_s\hrho_h^{n+\half}\otimes\partial_s\hX^{n+\half}+\partial_s\hX^{n+\half}\otimes\partial_s\hrho_h^{n+\half}+\partial_s\hrho_h^{n+\half}\otimes\partial_s\hrho_h^{n+\half}\right),
		\end{aligned}
	\end{equation*}
	which, as the Eulerian case, splits $d_{3}^2$ into five terms $\{d_{3}^{2,i}\}_{i=1}^5$. Note that $\hrho_h^{n+\half}=\frac{3}{2}\vrho_h^n-\frac{1}{2}\vrho_h^{n-1}$, so for $n\ge 1$ by defining 
	\begin{equation*}
		\vmY^n=\big(\partial_s\hX^{n+\half}\cdot \hrho_h^{n+\half}\big) \partial_t\vX(t_{n+\half})
	\end{equation*}
	and
	\begin{equation*}
		\vmZ^n=\big[\hrho_h^{n+\half}\otimes\partial_s\hX^{n+\half}+\partial_s\hX^{n+\half}\otimes\hrho_h^{n+\half}\big]\partial_t\vX(t_{n+\half}),
	\end{equation*}
	one can get similar estimate as before
	\begin{equation}\label{d32}
		\begin{aligned}
			d_{3}^2\le & \frac{c_\alpha c_1^2}{\Delta t}\epsilon \big\| \ve_h^{n+1} -\ve_h^n\big\|^2+c\frac{(1+\alpha)^2}{c_\alpha \epsilon}h^{2k+2}\Delta t+ch^{k+1}\Delta t \big\|\partial_s\ve_h^{n+1}\big\|\\
			&-2\left(\vmY^{n+1},\partial_s\ve_h^{n+1}\right)-(\alpha-1)\left(\vmZ^{n+1},\partial_s\ve_h^{n+1}\right)\\
			&+2\left(\vmY^{n},\partial_s\ve_h^{n}\right)+(\alpha-1)\left(\vmZ^{n},\partial_s\ve_h^{n}\right).
		\end{aligned}
	\end{equation}
	Sum the defects up and take $\epsilon=\frac{1}{48}$, then we get the bound
	\begin{equation}\label{est-f1-CN}
		\begin{aligned}
			&\frac{c_\alpha c_1^2}{8\Delta t} \big\| \ve_h^{n+1} -\ve_h^n\big\|^2  +\frac{1}{2}\left\|\partial_s\ve_h^{n+1}\right\|^2 +2\left(\vmY^{n+1},\partial_s\ve_h^{n+1}\right)+(\alpha-1)\left(\vmZ^{n+1},\partial_s\ve_h^{n+1}\right)\\
			\le & \frac{1}{2}\left\|\partial_s\ve_h^{n}\right\|^2 +\,ch^{k+1}\Delta t \big\|\partial_s\ve_h^{n+1}\big\|+c\frac{(1+\alpha)^2}{c_\alpha}\Delta t \left(\big\|\partial_s\ve_h^{n} \big\|^2+\big\|\partial_s\ve_h^{n-1} \big\|^2\right)\\
			&+2\left(\vmY^{n},\partial_s\ve_h^{n}\right)+(\alpha-1)\left(\vmZ^{n},\partial_s\ve_h^{n}\right) +c\frac{(1+\alpha)^2}{c_\alpha}(\Delta t)^5 +c\frac{(1+\alpha)^2}{c_\alpha}h^{2k+2}\Delta t.
		\end{aligned}
	\end{equation}
	Summing up the inequality for $n\ge 1$ it follows
	\begin{equation}\label{est-f2-CN}
		\begin{aligned}
			&\frac{c_\alpha c_1^2}{8\Delta t} \sum_{m=1}^n \big\| \ve_h^{m+1} -\ve_h^m\big\|^2 + \frac{1}{2}\left\|\partial_s\ve_h^{n+1}\right\|^2  +2\left(\vmY^{n+1},\partial_s\ve_h^{n+1}\right)+(\alpha-1)\left(\vmZ^{n+1},\partial_s\ve_h^{n+1}\right)\\
			\le & \Delta t \big\|\partial_s\ve_h^{n+1}\big\|^2+ c\frac{(1+\alpha)^2}{c_\alpha} \Delta t\sum_{m=1}^n \big\|\partial_s\ve_h^{m} \big\|^2+c\frac{(1+\alpha)^2}{c_\alpha}\big((\Delta t)^4+ h^{2k+2}\big)\\
			&+\frac{1}{2}\big\|\partial_s \ve_h^1\big\|^2 +2\left(\vmY^{1},\partial_s\ve_h^{1}\right)+(\alpha-1)\left(\vmZ^{1},\partial_s\ve_h^{1}\right).
		\end{aligned}
	\end{equation}
	Appealing to the definition of $\vmY^m$ and $\vmZ^m$ then we can get
	\begin{equation*}
		2\left(\vmY^{m},\partial_s\ve_h^{m}\right)+(\alpha-1)\left(\vmZ^{m},\partial_s\ve_h^{m}\right)\le c\,(1+\alpha) h^{k+1}\big\| \ve_h^m\big\|\le \epsilon\big\| \ve_h^m\big\|^2+\frac{c(1+\alpha)^2}{\epsilon}h^{2k+2}.
	\end{equation*}
	Take $\epsilon=\frac{1}{4}$ for $m=n+1$ and $\epsilon=\frac{1}{2}$ for $m=1$ then it follows
	\begin{equation}\label{est-f3-CN}
		\begin{aligned}
			&\frac{c_\alpha c_1^2}{8\Delta t} \sum_{m=1}^n \big\| \ve_h^{m+1} -\ve_h^m\big\|^2 + \big(\frac{1}{4}-\Delta t\big) \left\|\partial_s\ve_h^{n+1}\right\|^2 \\
			\le & c\frac{(1+\alpha)^2}{c_\alpha} \Delta t\sum_{m=1}^n \big\|\partial_s\ve_h^{m} \big\|^2+\big\|\partial_s \ve_h^1\big\|^2 +\tilde{c}\frac{(1+\alpha)^2}{c_\alpha}\big((\Delta t)^4+ h^{2k+2}\big).
		\end{aligned}
	\end{equation}
	By further assuming $\Delta t\le \frac{1}{8}$ and utilizing the following estimate that we will prove 
	\begin{equation}\label{est-e1}
		\|\partial_s \ve_h^1\|^2\le \sigma'_\alpha\big((\Delta t)^4+ h^{2k+2} \big),
	\end{equation}
	where $\sigma_\alpha'$ is a constant independent of $h$ and $\Delta t$ but dependent on $\alpha$,  together with the discrete Gronwall's inequality we get
	\begin{equation}\label{est-en-H1-CN}
		\left\|\partial_s\ve_h^{n+1}\right\|^2 \le \left(\sigma'_\alpha + \tilde{c}\frac{(1+\alpha)^2}{c_\alpha}\right)e^{\frac{c(1+\alpha)^2}{c_\alpha}t_{n}}\big((\Delta t)^4+ h^{2k+2} \big).
	\end{equation}
	So choosing $K^2_\alpha= \sigma'_\alpha+\tilde{c}\frac{(1+\alpha)^2}{c_\alpha}$ and $\beta_\alpha=\frac{c(1+\alpha)^2}{c_\alpha}$ the desired estimate of $\|\partial_s\ve_h^m\|$ for $m=n+1$ follows. 
	
	Now let us turn to \eqref{est-e1}. Note that
	\begin{equation*}
		\begin{aligned}
			\left(d_{0},\ve_h^1-\ve_h^0\right)&=\left(G(\hX_h^{\half})\delta_t\vrho_h^{1},\ve_h^{1}-\ve_h^0\right)\\
			&+\left( G(\hX_h^{\half})\big(\delta_t\vX(t_{1})-\partial_t\vX(t_{\half})\big),\ve_h^{1}-\ve_h^0\right)\\
			&+\left(\big[G(\hX_h^{\half})-G(R_h^{\half})\big]\partial_t\vX(t_{\half}),\ve_h^{1}-\ve_h^0\right)\\
			&+\left(\big[G(R_h^{\half})-G^{\half}\big]\partial_t\vX(t_{\half}),\ve_h^{1}-\ve_h^0\right)\\
			&+\left(\frac{1}{2}\partial_s (R_h^{1}+R_h^0)-\partial_s R_h^{\half},\partial_s\ve_h^{1}-\partial_s \ve_h^0\right)\\
			:&=D_{1}+D_{2}+D_{3}+D_4+D_5.
		\end{aligned}
	\end{equation*}
	For $D_1$, $D_2$ and $D_5$, they yield the similar estimates as $d_1$, $d_2$ and $d_5$
	\begin{equation}\label{D1-CN}
		D_1 \le \frac{c_\alpha c_1^2}{\Delta t}\epsilon \big\| \ve_h^{1} -\ve_h^0\big\|^2+\frac{c(1+\alpha)^2 h^{2k+2}}{c_\alpha\epsilon}\Delta t,
	\end{equation}
	\begin{equation}\label{D2-CN}
		D_{2}\le  \frac{c_\alpha c_1^2}{\Delta t} \epsilon\big\| \ve_h^{1} -\ve_h^0\big\|^2+\frac{c\,(1+\alpha)^2 }{c_\alpha\epsilon}(\Delta t)^5,
	\end{equation}
	and 
	\begin{equation}\label{D5-CN}
		D_{5}\le  \frac{c_\alpha c_1^2}{\Delta t} \epsilon\big\| \ve_h^{1} -\ve_h^0\big\|^2+\frac{c}{c_\alpha\epsilon}(\Delta t)^5.
	\end{equation}
	Note that $\hX_h^{\half}$ is the solution of our Euler scheme with a step size of $\frac{\Delta t}{2}$, it satisfies the error bound $\big\|\partial_s(R_h^{\half}-\hX_h^{\half})\big\|^2\le \sigma_\alpha (h^{2k+2}+(\Delta t/2)^{3})$ in \Cref{remark-e0}. Repeat similar procedures as we did for $d_3^1$ then it follows
	\begin{equation}\label{D3-CN}
		\begin{aligned}
			D_3&\le c(1+\alpha)\big\| \partial_s(R_h^{\half}-\hX_h^{\half})\big\| \big\|\ve_h^{1}-\ve_h^0\big\|\\
			&\le \frac{c_\alpha c_1^2}{\Delta t} \epsilon\big\| \ve_h^{1} -\ve_h^0\big\|^2+ \frac{c(1+\alpha)^2}{c_\alpha\epsilon}\big\| \partial_s(R_h^{\half}-\hX_h^{\half})\big\|^2 \Delta t\\
			&\le  \frac{c_\alpha c_1^2}{\Delta t} \epsilon\big\| \ve_h^{1} -\ve_h^0\big\|^2+\frac{c\, \sigma_\alpha(1+\alpha)^2 }{\epsilon}\left(\frac{1}{8}(\Delta t)^4+h^{2k+2}\Delta t\right).
		\end{aligned}
	\end{equation}
	For $D_4$, substitute $R_h^{\half}=\vX(t_{\half})+\vrho_h^{\half}$ and integrate by parts to transfer the spatial derivative before $\vrho_h^{\half}$ to other terms then we get the bound
	\begin{equation}\label{D4-CN}
		\begin{aligned}
			D_4	\le & c(1+\alpha) h^{k+1}\left(\big\| \ve_h^{1} -\ve_h^0\big\|+\big\|\partial_s( \ve_h^{1} -\ve_h^0)\big\|\right)\\
			\le & \frac{c_\alpha c_1^2}{\Delta t}\epsilon \big\| \ve_h^{1} -\ve_h^0\big\|^2+\frac{1}{4}\big\|\partial_s(\ve_h^{1}-\ve_h^0)\big\|^2 +c\frac{(1+\alpha)^2}{c_\alpha \epsilon}h^{2k+2}\Delta t+c(1+\alpha)^2h^{2k+2}.
		\end{aligned}
	\end{equation}
	
	The lower bound for LHS given in \eqref{left-side} still holds provided that $\frac{c_1}{2}\le |\partial_s \hX_h^{\half}|\le 2c_2$, which further requires $ch^{-\half}\|\partial_s\ve_h^{\half}\|\le \frac{c_1}{2}$ or equivalently, both $h$ and $h^{-1}(\Delta t)^3$ being small enough by virtue of \Cref{remark-e0}. So choosing $\epsilon=\frac{1}{40}$ together with  \eqref{left-side} and $\ve_h^0=\vec{0}$, the desired estimate \eqref{est-e1} follows.
	
	Repeat the same arguments as we did for $\|\ve_h^m\|$ in the Eulerian case then we complete the proof. 
\end{proof}

\section{Numerical examples}\label{sec-exp}

In this section we provide some numerical examples to test the convergence in both space and time. We also present some numerical tests to demonstrate  the advantage on mesh distribution. Throughout, we always use $N$ to denote the number of subintervals of $I$. All the computations are implemented in \texttt{Python} with the FEM package {\texttt{Firedrake}} \cite{Rathgeber2017Firedrake}.

\begin{example}\label{con_test_csf}\upshape 
	We use the standard example to test the convergence of our Euler scheme and Crank-Nicolson scheme. We simply set $\alpha=0.5$ in this example. The initial curve $\Gamma(0)$ is a circle with radius $r=1$ parameterized by $\vX^0(s)=(\cos s,\sin s)$ with $s\in [0,2\pi)$. Under such initial setting one can check $\vX(t,s)=\sqrt{1-2t}(\cos s,\sin s)$ is a solution of the curve shortening-DeTurck flow.  In this example the partition of $I$ is uniform. To test the convergence in space we use our Crank-Nicolson scheme in time with a uniform stepsize of $\Delta t=\frac{T}{M}$, where  $T=0.48$ and $M=3\times 10^5$. We  compute the  $L^2$ and $H^1$ error by $\|\vX_h^M-\vX(T)\|$ and $\|\partial_s(\vX_h^M-\vX(T))\|$, respectively.  \Cref{CT-csf-s} presents errors under different  $h$ and $k$. The convergence rates are then obtained by standard  procedures. One can observe the results consist with our theoretical analysis.
	\begin{table}[htbp]
		\centering
		\caption{\Cref{con_test_csf}: $L^2$ and $H^1$ errors under different $h$. }\label{CT-csf-s}
		\begin{tabular}{ccccccc}
			\toprule
			& $h$             & $\frac{2\pi}{8}$        & $\frac{2\pi}{16}$     & $\frac{2\pi}{32}$   & $\frac{2\pi}{64}$  & $\frac{2\pi}{128}$ \\ 
			\bottomrule
			\multirow{4}{*}{$k=1$} & $L^2$ Error   & 2.190e-01 &4.143e-02  &9.846e-03 & 2.432e-03 & 6.059e-04\\ 
			& Conv. rate     & --            &  2.40     & 2.07      & 2.02     & 2.01   \\
			\cmidrule{3-7}
			&$H^1$ Error   		 & 2.284e-01 &6.670e-02	 &2.957e-02 &1.435e-02	& 7.122e-03  \\ 
			& Conv. rate         & --       &  1.78 & 1.18    & 1.04       & 	1.01 \\			
			\cmidrule{1-7} 
			\multirow{4}{*}{$k=2$}& $L^2$ Error    &    1.307e-03 & 1.713e-04  & 2.172e-05  & 2.725e-06  &  3.418e-07  \\ 
			&  Conv. rate    & --           &  2.93      &2.98        &	3.00     &  3.00    \\
			\cmidrule{2-7}
			& $H^1$ Error         & 1.164e-02   & 2.889e-03  & 7.208e-04  & 1.801e-04 &  4.502e-05  \\ 
			& Conv. rate          & --          & 2.00       & 2.00       &2.00       & 2.00      \\
			\cmidrule{1-7} 
			\multirow{4}{*}{$k=3$} & $L^2$ Error        &   5.414e-05 &  3.225e-06  & 1.991e-07 &  1.242e-08  & 8.652e-10 \\ 
			& Conv. rate      & --          & 4.07        & 4.02      &   4.00      & 3.84    \\
			\cmidrule{2-7}
			& $H^1$ Error     & 7.611e-04   & 9.549e-05   & 1.195e-05 &  1.494e-06 &  1.868e-07\\ 
			& Conve. rate       & --          & 3.00        & 3.00      &  3.00      & 3.00      \\
			\bottomrule
		\end{tabular}
	\end{table}	
	
	For the convergence in time, we fix the spatial mesh with $h=\frac{2\pi}{320}$ and use finite elements of degree $k=3$ to reduce the pollution from spatial errors.  \Cref{CT-csf-t} presents  the $H^1$ errors under various stepsizes, from which one can observe that our Euler scheme is of first order accuracy and our Crank-Nicolson scheme is of second order accuracy. The $L^2$ error is very close to the $H^1$ error so we omit it here.
	\begin{table}[htbp]
		\centering
		\caption{\Cref{con_test_csf}: $H^1$ errors under different $\Delta t$, $\Delta t_0=1.2\times 10^{-2}$.}\label{CT-csf-t}
		\begin{tabular}{cccccc}
			\toprule
			$\Delta t$             & $\Delta t_0$  & $2^{-1}\Delta t_0$     & $2^{-2}\Delta t_0$   & $2^{-3}\Delta t_0$  & $2^{-4}\Delta t_0$ \\ 
			\cmidrule{1-6}
			Euler 	       &  2.211e-01    & 1.334e-01              & 7.589e-02            & 4.113e-02           & 2.154e-02 \\ 
			Conv. rate  & --           &0.72              &   0.81           &    0.88        &       0.93     \\
			\cmidrule{1-6}		
			Crank-Nicolson   & 1.550e-02 &  4.634e-03  & 1.261e-03 &  3.275e-04 &  8.329e-05\\ 
			Conv. rate  & --            &1.74          &  1.88            & 1.95              & 1.98    \\ 
			\bottomrule
		\end{tabular}
	\end{table}	
\end{example}

\begin{example}\label{test-2}\upshape 
	In this example, we present some numerical results of our Crank-Nicolson scheme with finite elements of degree $k=4$. In all the tests of this example, we  set $\alpha=0.2$ and $N=50$. The following three initial curves  are considered with $s\in[0,2\pi)$:
	\begin{equation*}
		\begin{aligned}
			a)&  \qquad \vX_0=\left[
			\begin{matrix}
				\cos {2s}\cos{s}\\
				\cos {2s}\sin{s}
			\end{matrix}
			\right],\\
			b) & \qquad \vX_0=\left[
			\begin{matrix}
				(1+0.65\sin 7{s})\cos{s}\\
				(1+0.65\sin 7{s})\sin{s}
			\end{matrix}
			\right],\\
			c) &\qquad \vX_0=\left[
			\begin{matrix}
				\cos{s}\\
				0.5\sin{s}+\sin(\cos{s})+\sin{s}(0.2+\sin{s}\sin^2{ 3s})
			\end{matrix}
			\right].
		\end{aligned}
	\end{equation*}
	The numerical curves for cases a) to c) are presented in \Cref{fig-ex1,fig-ex2,fig-ex3}, respectively, where we use {\it black dots} to mark the positions of $\vX_h^m(s_j)$ with $s_j=\frac{2j\pi}{N}$, and the gray lines between them are polynomials of degree $4$.  As we expected, since  $I$ is uniformly divided, the vertices in all the tests approach equidistributed due to the effect of the harmonic map heat flow acting on the tangent space. One also can observe that $50$ subintervals are enough to capture the details of the evolution. 
	\begin{figure}
		\centering
		\subfigure[Initial curve of a)]{\includegraphics[width=.31\textwidth,trim=120 270 120  270, clip]{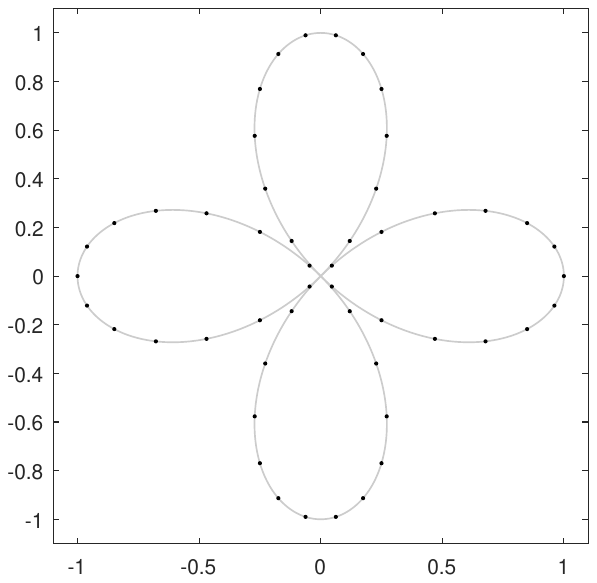}} 
		\subfigure[$t=0.07$ ]{\includegraphics[width=.31\textwidth,trim=120 270 120  270, clip]{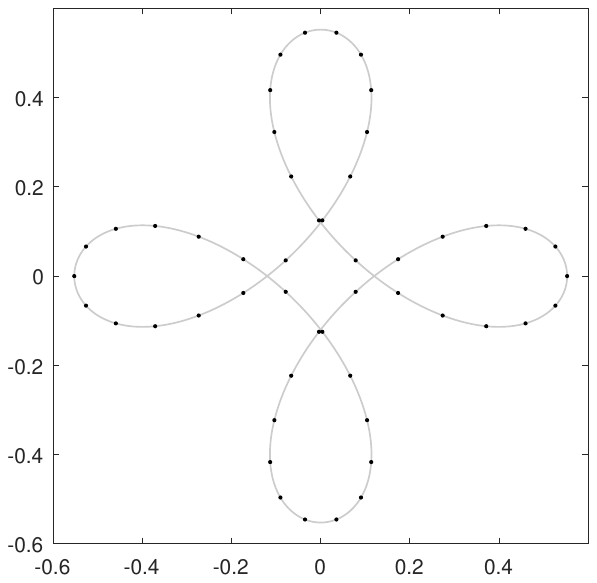}} 
		\subfigure[$t=0.0859$ ]{\includegraphics[width=.31\textwidth,trim=120 270 120  270, clip]{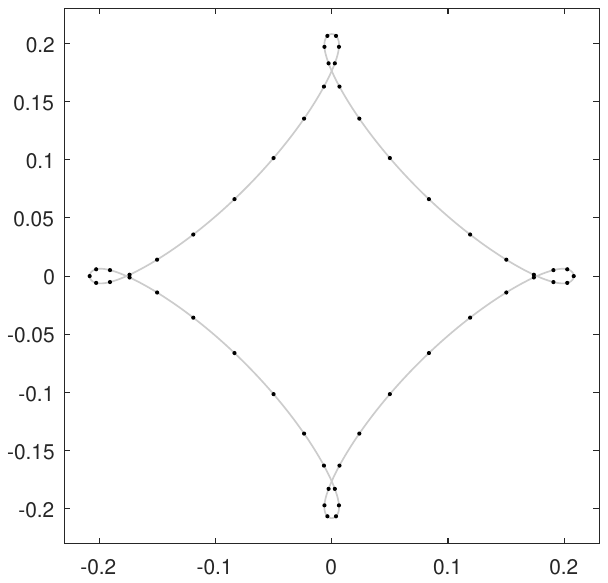}}\\
		\subfigure[$t=0.0861$ ]{\includegraphics[width=.31\textwidth,trim=120 270 120  270, clip]{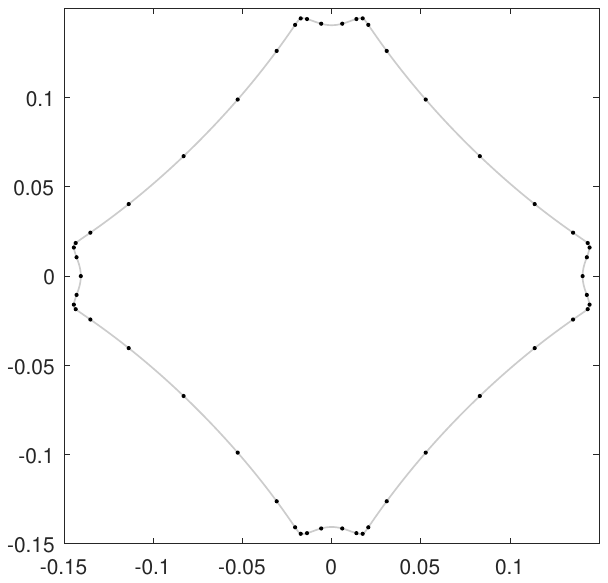}}
		\subfigure[$t=0.87$ ]{\includegraphics[width=.31\textwidth,trim=120 270 120  270, clip]{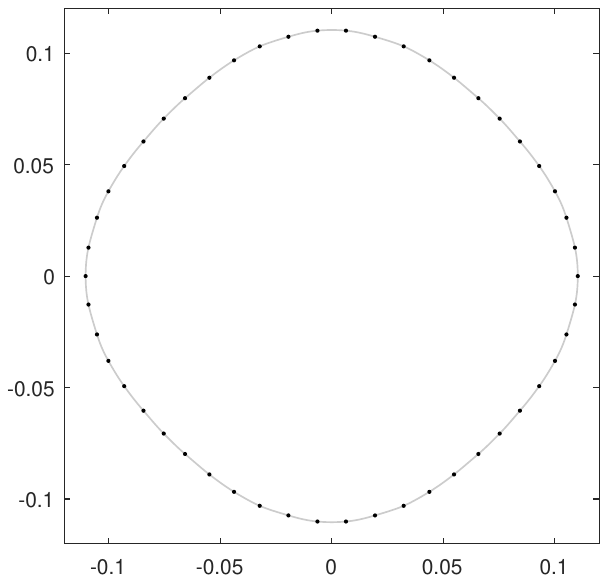}}
		\subfigure[$t=0.88$ ]{\includegraphics[width=.31\textwidth,trim=120 270 120  270, clip]{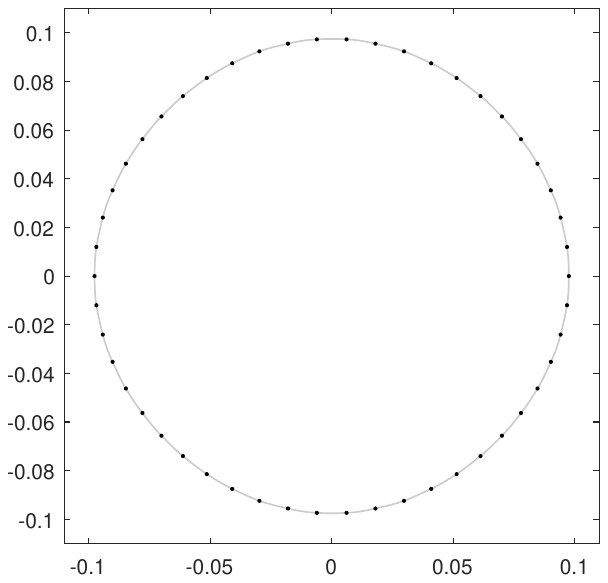}}	
		\caption{\Cref{test-2}: Numerical results of case  a) under $\Delta t=1\times10^{-4}$.}
		\label{fig-ex1}
	\end{figure}
	
	\begin{figure}
		\centering
		\subfigure[Initial curve of b)]{\includegraphics[width=.31\textwidth,trim=120 270 120  270, clip]{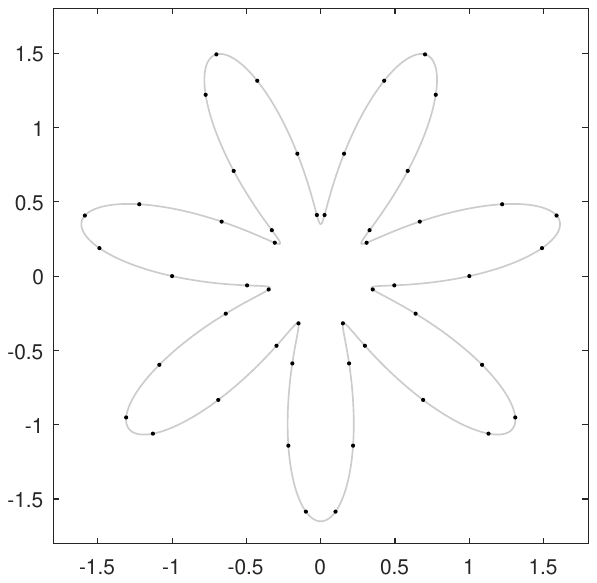}} 
		\subfigure[$t=0.04$ ]{\includegraphics[width=.31\textwidth,trim=120 270 120  270, clip]{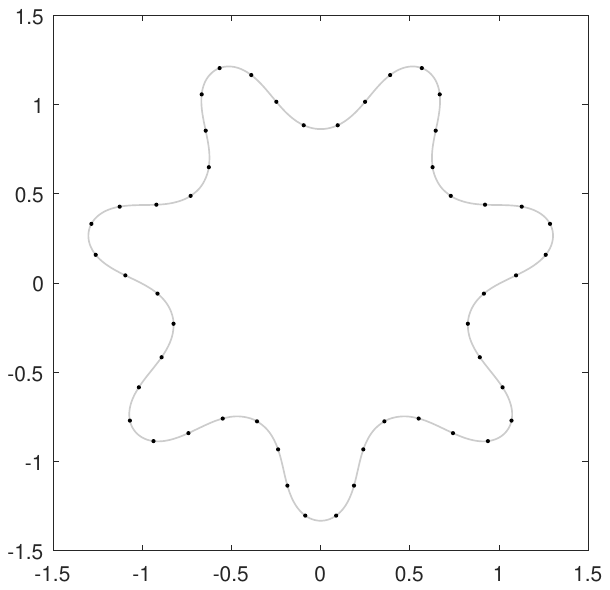}} 
		\subfigure[$t=0.4$ ]{\includegraphics[width=.31\textwidth,trim=120 270 120  270, clip]{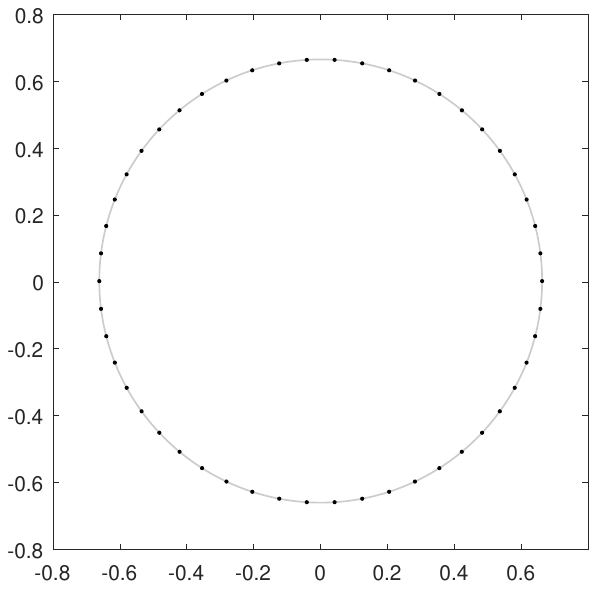}}\\
		\caption{\Cref{test-2}: Numerical results of case  b) under $\Delta t=1\times10^{-3}$.}
		\label{fig-ex2}
	\end{figure}
	
	\begin{figure}
		\centering
		\subfigure[Initial curve of c)]{\includegraphics[width=.31\textwidth,trim=120 270 120  270, clip]{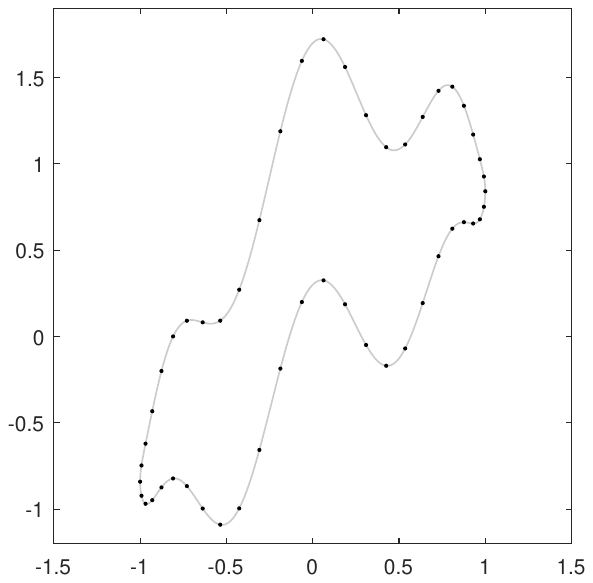}} 
		\subfigure[$t=0.1$ ]{\includegraphics[width=.31\textwidth,trim=120 270 120  270, clip]{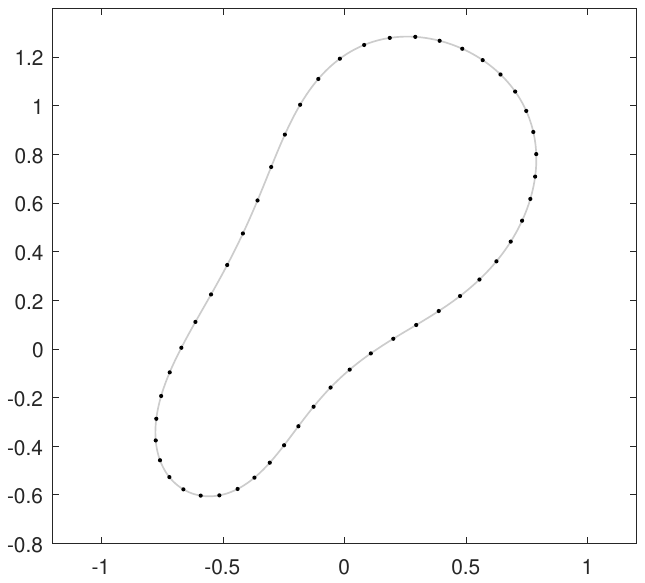}} 
		\subfigure[$t=0.345$ ]{\includegraphics[width=.31\textwidth,trim=120 270 120  270, clip]{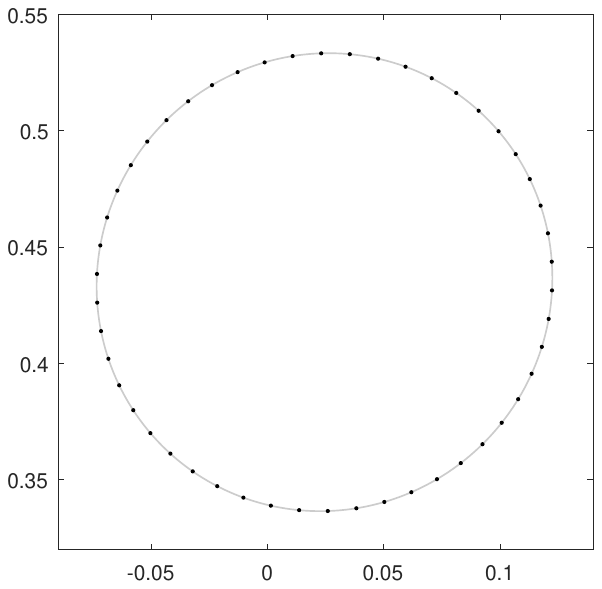}}\\
		\caption{\Cref{test-2}: Numerical results of case  c) under $\Delta t=5\times10^{-4}$.}
		\label{fig-ex3}
	\end{figure}
	
	We provide the discrete energy $|\Gamma_h^m|=\int_I |\partial_s\vX_h^m| ds$ in \Cref{fig-Eny}. For comparison we also include the results of elements of degree $k=1$ with $N=50\times 4=200$ nodes. One can see that the energy decreases monotonically for each case, and the results of $k=1$ and $k=4$ consist with each other well.
	
	\begin{figure}
		\centering
		\subfigure[case a)]{\includegraphics[width=.31\textwidth,trim=90 270 120  270, clip]{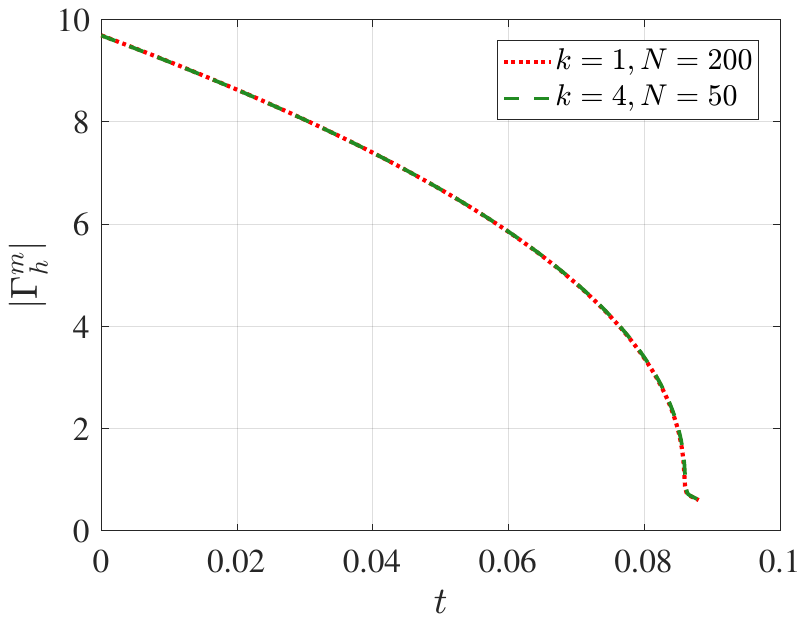}} 
		\subfigure[case b)]{\includegraphics[width=.31\textwidth,trim=90 270 120  270, clip]{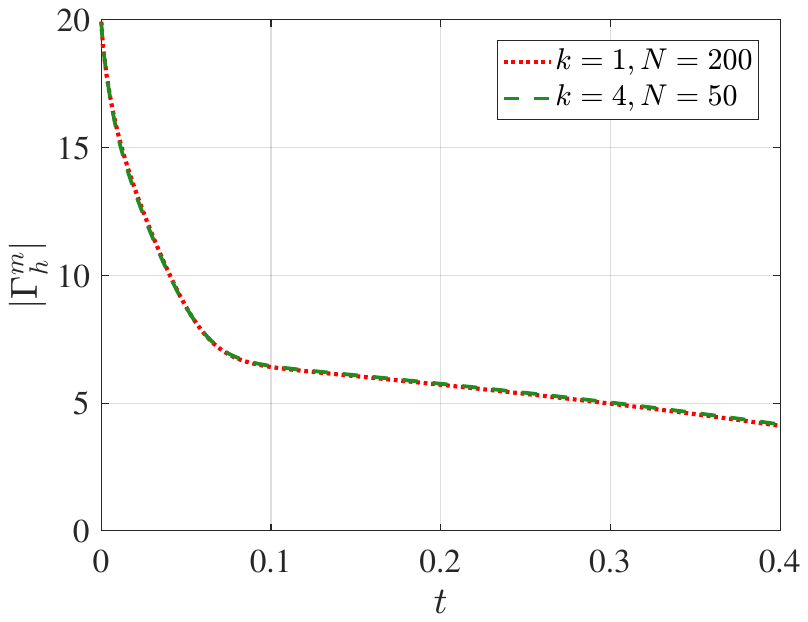}} 
		\subfigure[case c)]{\includegraphics[width=.31\textwidth,trim=90 270 120  270, clip]{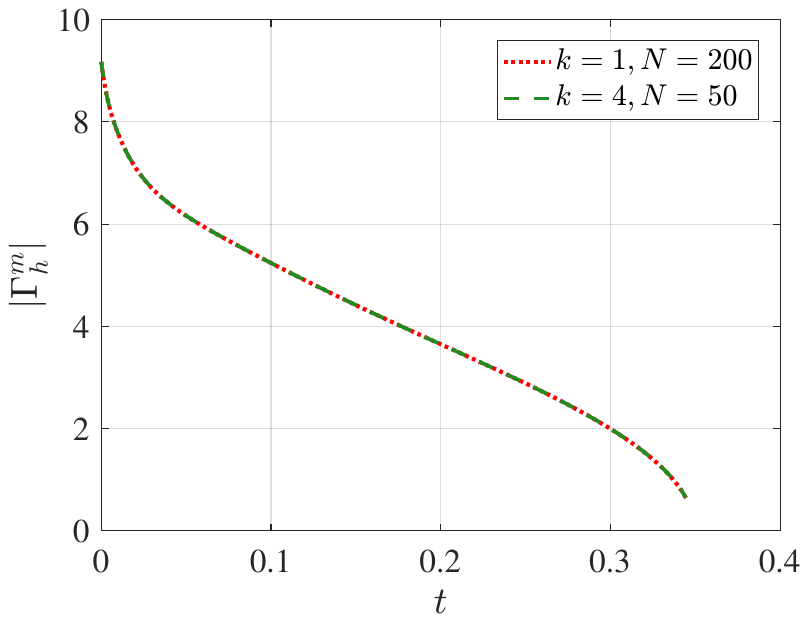}}
		\caption{\Cref{test-2}: $|\Gamma_h^m|$ with respect to time.}
		\label{fig-Eny}
	\end{figure}
	
	To further illustrate the distribution of $\big\{\vX_h^m(s_j)\big\}_{j=0}^{N-1}$ resembles that of $\{s_j\}_{j=0}^{N-1}$, we construct $\{s_j\}_{j=0}^{N-1}$ by $s_j=\pi+\pi \sin \frac{1}{2}(\theta_j-\pi) $ with $\theta_j=\frac{2\pi}{N}j$.  The nodes cluster at $s=0$, see \Cref{fig-ex3-nonuniform} (a). \Cref{fig-ex3-nonuniform} (b) - (c) present the numerical curves at $t=0.1$ and $t=0.345$, respectively, where the initial setting is given by case c) and the parameters are the same as before. In the two plots, we mark the position of $\vX_h^m(s_0)$ by red `$\circ$'. One can see that the nodes cluster at $\vX_h^m(s_0)$, resembling that of the reference mesh given in \Cref{fig-ex3-nonuniform} (a). To quantify the distribution of the nodes, we introduce the ratios $\{r_j^m\}_{j=1}^N$ between corresponding (curved) edges of  the reference mesh and $\Gamma_h^m$
	\begin{equation*}
		r_j^m=\frac{1}{|s_{j}-s_{j-1}|}\int_{s_{j-1}}^{s_j}|\partial_s\vX_h^m| \,ds. 
	\end{equation*}
	We then use $r^m=\frac{\max_{j}r_j^m}{\min_j r_j^m}$ to characterize the distribution of the nodes at $t=t_m$. \Cref{fig-ex3-ratio} presents $r^m$ under different $k$ and $N$ as the function of time. As expected, one can see $r^m$ approaches $1$  as time goes on.  In fact, in all the cases $r^{M}-1\approx 0.006$.
	\begin{figure}
		\centering
		\subfigure[Reference mesh]{\includegraphics[width=.31\textwidth,trim=90 270 120  270, clip]{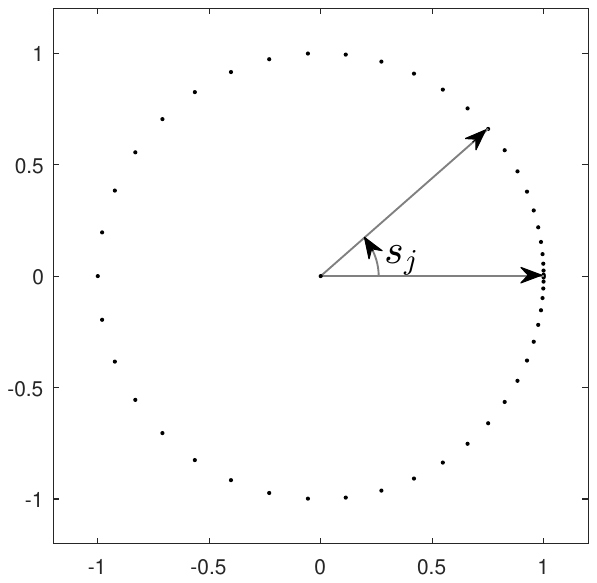}} 
		\subfigure[$t=0.1$ ]{\includegraphics[width=.31\textwidth,trim=120 270 120  270, clip]{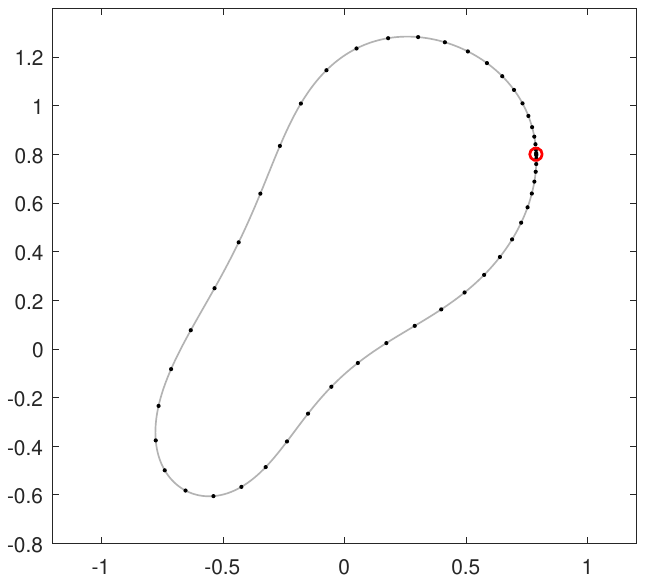}} 
		\subfigure[$t=0.345$ ]{\includegraphics[width=.31\textwidth,trim=120 270 120  270, clip]{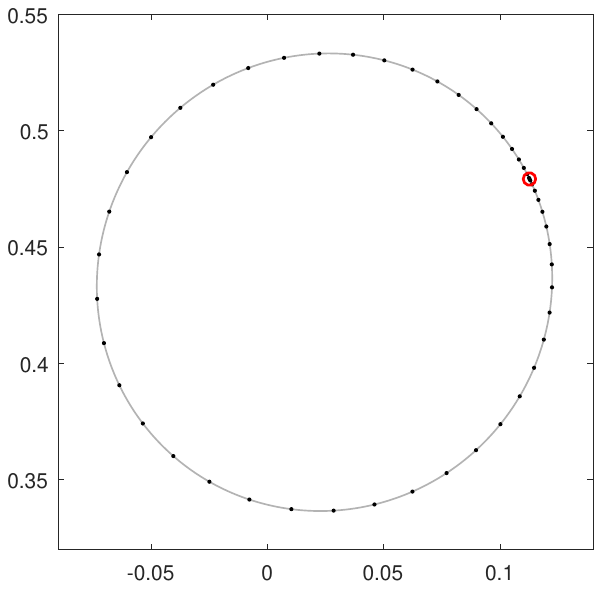}}\\
		\caption{\Cref{test-2}: Reference mesh (partition of $I$)  and numerical curves at different time of case  c) under $\Delta t=5\times10^{-4}$, $k=4$ and $N=50$. The position of $\vX_h^m(s_0)$ is marked by red ` $\circ$'.}
		\label{fig-ex3-nonuniform}
	\end{figure}
	\begin{figure}
		\centering
		\includegraphics[width=.5\textwidth,trim=90 270 120  270, clip]{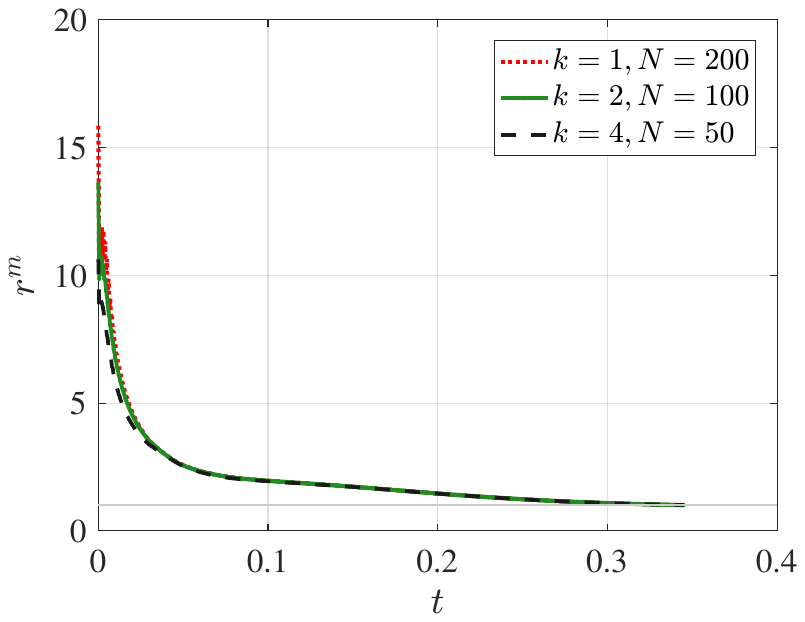}
		\caption{\Cref{test-2}: The mesh index $r^m$ at different time under different $k$ and $N$.}
		\label{fig-ex3-ratio}
	\end{figure}
\end{example}

\section{Conclusion}
We demonstrated the optimal $L^2$-error estimates for two fully discrete schemes of the curve shortening-DeTurck flow. Linearized Euler and Crank–Nicolson schemes were employed for temporal discretization, and the standard finite element method was applied in space. We also presented an extrinsic way to derive the curve shortening–DeTurck flow. This shows that the reformulated flow reduces both the perimeter and the harmonic energy; consequently, in the discrete setting, the resulting numerical curves distribute nodes in a manner resembling the reference mesh. This property facilitates the design of algorithms for adaptive meshing.

%

\bibliographystyle{abbrv}
\bibliography{References_GFs}
\end{document}